\def\C{\mathbb {C}} 
\def\Q{\mathbb {Q}} 
\def\N{\mathbb {N}} 
\def\Z{\mathbb {Z}}
\def\ZK{\Z_K}
\def\ZKtimes{\ZK^\times}
\def\thetanu{\nu}
\def\atop#1#2{
\genfrac{}{}{0pt} {} 
{#1} 
{#2}}
\def\epsilonbar{\overline{\epsilon}}
\def\house#1{\setbox1=\hbox{$\,#1\,$}%
\dimen1=\ht1 \advance\dimen1 by 2pt \dimen2=\dp1 \advance\dimen2 by 2pt
\setbox1=\hbox{\vrule height\dimen1 depth\dimen2\box1\vrule}%
\setbox1=\vbox{\hrule\box1}%
\advance\dimen1 by .4pt \ht1=\dimen1
\advance\dimen2 by .4pt \dp1=\dimen2 \box1\relax}
\def\rmN{\mathrm{N}}
\def\trace{\mathrm{Tr}}
\def\rmNorm{\mathrm{N}}
\def\rmh{\mathrm{h}}
\def\tors{{\mathrm{tors}}}
\def\sff{\mathsf{f}}
\def\sfA{\mathsf{A}}
\def\sfF{\mathsf{F}}
\newtheorem{theorem}{Theorem}[section]
\newtheorem{theoreme}[theorem]{Th\'eor\`eme} 
\newtheorem{conjecture}[theorem]{Conjecture} 
\newtheorem{proposition}[theorem]{Proposition}
\newtheorem{lemme}[theorem]{Lemme}
\newtheorem{corollaire}[theorem]{Corollaire}
\theoremstyle{definition}
\theoremstyle{remark}
\numberwithin{equation}{section}
\def\og{\leavevmode\raise.3ex\hbox{$\scriptscriptstyle
\langle\!\langle$}}
\def\fg{\leavevmode\raise.3ex\hbox{$\scriptscriptstyle
\,\rangle\!\rangle$}}
\newcounter{compteurkappa} 
\def\Newcst#1{
\refstepcounter{compteurkappa}
\kappa_{ 
\arabic{compteurkappa}}
\label{#1}
}
\def\cst#1{\kappa_{\ref{#1}}}
\def\boxit#1#2{\setbox1=\hbox{\kern#1{#2}\kern#1}%
\dimen1=\ht1 \advance\dimen1 by #1 \dimen2=\dp1 \advance\dimen2 by #1
\setbox1=\hbox{\vrule height\dimen1 depth\dimen2\box1\vrule}%
\setbox1=\vbox{\hrule\box1\hrule}%
\advance\dimen1 by .4pt \ht1=\dimen1
\advance\dimen2 by .4pt \dp1=\dimen2 \box1\relax}
\begin{document}
 
 \hfill 
 
 \null
 

 \bigskip

 \title[Familles d'\'equations de Thue]{Familles d'\'equations de Thue associ\'ees \`a 
un sous-groupe de rang $\mathbf{1}$ d'unit\'es
totalement r\'eelles d'un corps de nombres}


\author{Claude Levesque}
\address{D\'epartement de math\'ematiques et de statistique
\\
Universit\'e Laval, Qu\'ebec (Qu\'ebec)
\\
CANADA G1V 0A6}
\curraddr{}
\email{Claude.Levesque@mat.ulaval.ca}
\thanks{}

\author{Michel Waldschmidt}
\address{UPMC Univ Paris 06, UMR 7586-IMJ
\\
75005 Paris
\\
FRANCE}
\curraddr{}
\email{michel.waldschmidt@imj-prg.fr}
\thanks{}

\dedicatory{D\'edi\'e \`a {\sc Ram Murty} \`a l'occasion de son $60^e$ anniversaire de naissance}.

\subjclass[2010]{Primary 
11D61  Secondary 11D25 11D41 11D59}


\begin{abstract} 
Let $F$ be an irreducible binary form attached to a number field $K$ of degree $\geq 3$. Let $\epsilon\not\in \{-1, 1\}$ be a totally real unit of $K$. By twisting $F$ with the powers $\epsilon^a$ of $\epsilon$, ($a\in\Z$), we obtain an infinite family $F_a$ of binary forms. Let $m\in\Z$. We give an effective bound for 
$\max\{|a|, \log|x|, \log|y|\}$ when $a,x,y$ are rational integers satisfying $F_a(x,y)=m$ with $xy\not=0$.
\\
\bigskip
\bigskip

\noindent
{\sc R\'esum\'e.}
Soit $F$ une forme binaire irr\'eductible attach\'ee \`a un corps de nombres $K$ de degr\'e $\ge 3$. Soit $\epsilon\not \in \{-1,1\}$ une unit\'e totalement r\'eelle de $K$. En tordant $F$ par les puissances $\epsilon^a$ de $\epsilon$, ($a\in\Z$), nous obtenons une famille infinie $F_a$ de formes binaires. Soit $m\in\Z$. Nous donnons une borne effective pour $\max\{|a|, \log|x|, \log|y|\}$ quand $a,x,y$ sont des entiers rationnels satisfaisant $F_a(x,y)=m$ avec $xy\not=0$.
\\
\end{abstract}

\maketitle

  {\bf Mots clefs:} 
\'equations de Thue, formes binaires, \'equations diophantiennes, bornes effectives.

\section{Le r\'esultat principal }\label{S:ResultatPrincipal}

 Soit $\alpha$ un nombre alg\'ebrique de degr\'e $d\ge 3$ sur $\Q$. On d\'esigne par $K$ le corps de nombres $\Q(\alpha)$, par $f\in \Z[X]$ le polyn\^ome irr\'eductible de $\alpha$ sur $\Z$ et par $ \ZKtimes$ le groupe des unit\'es de $K$. \`a chaque unit\'e $\varepsilon\in \ZKtimes$ dont le degr\'e $r=[\Q(\alpha\varepsilon):\Q]$ est $\geq 3$, on attache le polyn\^ome irr\'eductible $\sff_{\varepsilon}(X)\in \Z[X]$ de $\alpha\varepsilon$ sur $\Z$ (d\'efini de fa\c{c}on unique quand on impose que le coefficient directeur soit $>0$) et par $\sfF_{\varepsilon}$ la forme binaire irr\'eductible 
$$
\sfF_{\varepsilon}(X,Y) = Y^r \sff_{\varepsilon}(X/Y)\in\Z[X,Y].
$$
 On d\'esigne par $\rmh$ la hauteur logarithmique absolue. 
Rappelons la conjecture $1$ de \cite{LW2}. 
 
\begin{conjecture}
\label{ConjecturePrincipale}
Il existe une constante effectivement calculable $\Newcst{kappaConjecture}>0$, ne d\'ependant que de $\alpha$, telle que, pour tout $m\ge 2$, toute solution $(x,y,\varepsilon)\in\Z\times \Z\times\ZKtimes$ de l'in\'egalit\'e 
$$
| \sfF_\varepsilon(x,y)|\le m, \mbox{ avec }\; xy\neq 0\;\mbox{ et } \; [\Q(\alpha\varepsilon) : \Q]\ge 3,
$$
 satisfait
$$
 \max\{|x|,\; |y|,\; e^{\rmh(\alpha\varepsilon)}\}\le m^{\cst{kappaConjecture}}.
$$
\end{conjecture}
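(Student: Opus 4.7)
The classical resolution of an individual Thue equation $\sfF(x,y)=m$ proceeds by factoring $\sfF$ over $K$, reducing to an $S$-unit equation via Siegel's identity, and applying Baker's lower bound for linear forms in logarithms. The challenge of Conjecture~\ref{ConjecturePrincipale} is to push this scheme through while keeping all the constants uniform in the twisting unit $\varepsilon$ and, simultaneously, forcing $\rmh(\alpha\varepsilon)$ itself to be $O(\log m)$: that is, the uniformity and the height bound must be proved together.

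I would begin by writing
\[
\sfF_\varepsilon(x,y) \;=\; a \prod_{i=1}^{r} \bigl(x - \beta^{(i)} y\bigr),
\]
where $a>0$ is the leading coefficient of $\sff_\varepsilon$ and $\beta^{(1)},\dots,\beta^{(r)}$ are the Galois conjugates of $\beta:=\alpha\varepsilon$. The hypothesis $|\sfF_\varepsilon(x,y)|\le m$ with $xy\ne 0$ forces one factor $|x-\beta^{(i_0)}y|$ to be extremely small and the others to satisfy $|x-\beta^{(i)}y| \asymp |\beta^{(i_0)}-\beta^{(i)}|\cdot|y|$, yielding first a crude bound of the shape $\log\max(|x|,|y|) \le c_1 \log m + c_2\,\rmh(\alpha\varepsilon)$. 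Then, applying Siegel's identity to three conjugates $\beta^{(i)},\beta^{(j)},\beta^{(k)}$ produces a relation $u+v=1$, where $u,v$ are products of factors $(x-\beta^{(\ell)}y)$, of differences $(\beta^{(\ell)}-\beta^{(\ell')})$, and of units of $K$, and lie in a finitely generated $S$-unit subgroup of the Galois closure of $K$, with $S$ controlled by the rational primes dividing $m$ and the discriminant of $K$. Invoking the Baker--W\"ustholz--Matveev lower bound for $|\log u|$ (applied to an expression for $u$ as an explicit combination of logarithms of fundamental units of $K$ together with the archimedean logarithm of the ratio of two conjugate differences) and comparing it with the trivial upper bound $|\log u|\le c_3 \log m - c_4\bigl(\rmh(\alpha\varepsilon)+\log\max(|x|,|y|)\bigr)$ coming from step one should isolate $\rmh(\alpha\varepsilon)\le c_5 \log m$, whence $\max(|x|,|y|)\le m^{c_6}$, giving the conjectured inequality.

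\textbf{Main obstacle.} The delicate point is absorbing the dependence of Baker's lower bound on $\rmh(\alpha\varepsilon)$: the multiplicative factor $\rmh(\alpha\varepsilon)$ in that bound must be dominated by the term linear in $\rmh(\alpha\varepsilon)$ coming from the Thue factorisation, and this forces one to choose the Siegel triple $(i_0,j,k)$ so that $\log\bigl((\beta^{(i_0)}-\beta^{(j)})/(\beta^{(i_0)}-\beta^{(k)})\bigr)$ has height bounded purely in terms of $\alpha$, not of $\varepsilon$. Guaranteeing such a choice in full generality is precisely what leaves the conjecture open in the form stated. In the restricted setting of the present paper, where $\varepsilon$ runs through a rank-one group of totally real units $\epsilon^a$, this non-degeneracy should be achievable by elementary real-analytic control of the embeddings of $\epsilon^a$ as $|a|\to\infty$, so that the large parameter becomes $|a|$ itself, and the conjecture can be reduced to an effective upper bound on $|a|$ in terms of $m$, from which the bounds on $\log|x|,\log|y|$, and $\rmh(\alpha\epsilon^a)$ follow.
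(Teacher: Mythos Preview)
The statement is a \emph{conjecture}; the paper does not prove it, and you rightly do not claim to either. Your general plan (factor over $K$, Siegel identity, Baker-type lower bound) is the correct framework and is also the one underlying the paper. Your diagnosis of the obstacle --- that the Baker bound carries a factor depending on $\rmh(\alpha\varepsilon)$ which must somehow be absorbed --- is accurate, and the paper, like you, only resolves this when $\varepsilon$ is restricted to a rank-$1$ subgroup generated by a totally real unit.

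Where your sketch of the special case is too optimistic is the phrase ``this non-degeneracy should be achievable by elementary real-analytic control of the embeddings of $\epsilon^a$''. In practice the difficulty is not merely to pick a good Siegel triple: once you write down the six-term Siegel relation for three embeddings and isolate the two dominant terms, the resulting linear form in logarithms may actually \emph{vanish}, and then Baker gives you nothing. The paper's proof (Section~3) is a seven-step cascade that repeatedly confronts this: it fixes specific extremal embeddings $\sigma_\alpha,\tau_\alpha,\sigma_\beta,\tau_\beta$, uses an elementary dominance lemma (Lemma~2.1) to reduce the six-term identity to a two-term approximate equality, applies a Baker-type lemma (Lemma~3.1) with a parameter $\lambda$ that is allowed to vary, and --- each time the linear form turns out to vanish exactly --- feeds that algebraic identity back into the Siegel relation to produce a shorter relation and a new linear form. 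The totally real hypothesis is used in a concrete way you do not mention: it guarantees that distinct conjugates of $\epsilon$ have distinct absolute values (since $\varphi(\epsilon)\ne\pm\psi(\epsilon)$ forces $|\varphi(\epsilon)|\ne|\psi(\epsilon)|$ when both are real), which is what makes the ratios $|\varphi(\epsilon)/\psi(\epsilon)|^a$ decay exponentially in $a$ and drives the whole argument. Your outline captures the first move but not this iterative degeneracy-handling, which is where the real work lies.
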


Au cours de cet article, nous nous proposons de prouver le cas particulier de cette conjecture o\`u on restreint les solutions $(x,y,\varepsilon)$ \`a un ensemble \mbox{$\Z\times \Z\times U$}, o\`u $U$ est un sous-groupe de rang $1$ de $\ZKtimes$ engendr\'e par une unit\'e totalement r\'eelle.
 
Soit $\alpha$ un nombre alg\'ebrique de degr\'e $d\ge 3$. Soit $\epsilon$ une unit\'e d'ordre infini du corps de nombres $\Q(\alpha)$. 
Quand $a$ est un nombre entier tel que $\Q(\alpha\epsilon^a)=\Q(\alpha)$, on d\'esigne par $F_a\in \Z[X,Y]$ la forme binaire irr\'eductible de degr\'e $d$ telle que $F_a( \alpha\epsilon^a,1)=0$ et $F_a(1,0)>0$. Avec les notations de la conjecture $\ref{ConjecturePrincipale}$ on a 
$F_a=\sfF_{\epsilon^a}$.
Ainsi, en d\'esignant par $\Phi$ l'ensemble des plongements de $K$ dans $\C$, le polyn\^ome irr\'eductible de $\alpha$ sur $\Z$ est
$$
F_0(X,1)=a_0\prod_{\varphi\in\Phi}\bigl(X-\varphi(\alpha)\bigr)
$$
avec $a_0=F_0(0,1)$, tandis que le polyn\^ome irr\'eductible de $\alpha\epsilon^a$ sur $\Z$ est
$$
F_a(X,1)=a_0\prod_{\varphi\in\Phi}\bigl(X-\varphi(\alpha\epsilon^a)\bigr).
$$ 
Le th\'eor\`eme que nous d\'emontrons est le suivant.

\begin{theoreme}\label{Theoreme:rang1}
Soit $\alpha$ un nombre alg\'ebrique de degr\'e $d\ge 3$. Soit $\epsilon$ une unit\'e totalement r\'eelle du corps $\Q(\alpha)$. 
Il existe une constante effectivement calculable $\Newcst{kappaTheoreme}>0$, ne d\'ependant que de $\alpha$ et $\epsilon$, telle que, pour tout $m\ge 2$, tout triplet $(x,y,a)\in\Z^3$ satisfaisant 
$$
| F_a(x,y)|\le m,\; \mbox{ avec } \; xy\neq 0\;\mbox{ et } \; \Q(\alpha\epsilon^a) =\Q(\alpha),
$$
 v\'erifie
$$
 \max\{\log |x|,\; \log |y|,\;|a| \}\le \cst{kappaTheoreme}\log m. 
$$
\end{theoreme}
\medskip 

L'\'enonc\'e suivant a \'et\'e utilis\'e dans \cite{LW3}. 

\begin{corollaire}\label{Corollaire:cubiquetotalementreel}
Supposons le corps $K=\Q(\alpha)$ cubique. Soit $\epsilon$ une unit\'e de $K$. 
Il existe une constante effectivement calculable $\Newcst{kappacorollaire}>0$, ne d\'ependant que de $\alpha$ et $\epsilon$, telle que, pour tout $m\ge 2$, tout triplet $(x,y,a)\in\Z^3$ satisfaisant 
$$
| F_a(x,y)|\le m,\; \mbox{ avec } \; xy\neq 0 \; \mbox{ et }\; \Q(\alpha\epsilon^a) =\Q(\alpha),
$$ 
v\'erifie
$$
 \max\{\log |x|,\; \log |y|,\;|a| \}\le \cst{kappacorollaire}\log m. 
$$
\end{corollaire}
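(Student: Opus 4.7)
\medskip

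Le plan est de distinguer deux cas selon la signature du corps cubique $K=\Q(\alpha)$. Dans le premier cas, $K$ est totalement r\'eel: tous les plongements de $K$ dans $\C$ ont alors leur image dans $\R$, donc toute unit\'e de $K$ est totalement r\'eelle. En particulier $\epsilon$ est totalement r\'eelle, et le corollaire r\'esulte imm\'ediatement du Th\'eor\`eme~\ref{Theoreme:rang1}.

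Dans le second cas, $K$ est de signature $(1,1)$. Aucune unit\'e de $K$ distincte de $\pm 1$ n'est alors totalement r\'eelle, et le Th\'eor\`eme~\ref{Theoreme:rang1} ne s'applique pas directement. N\'eanmoins, $\ZKtimes$ est encore de rang $1$ par le th\'eor\`eme de Dirichlet, structure qui constitue l'ingr\'edient essentiel de la preuve du Th\'eor\`eme~\ref{Theoreme:rang1}. Plus pr\'ecis\'ement, si $\sigma_1$ d\'esigne le plongement r\'eel de $K$ et $\sigma_2,\sigma_3=\overline{\sigma_2}$ ses plongements complexes, alors pour toute unit\'e $\eta\in\ZKtimes$ on a $|\sigma_1(\eta)|\cdot|\sigma_2(\eta)|^2=1$, de sorte que le vecteur $(\log|\sigma_i(\eta)|)_{i=1,2,3}$ est astreint \`a la droite $\{(-2t,t,t):t\in\R\}$ de $\R^3$, exactement comme dans le cas totalement r\'eel de rang $1$.

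Nous proposons donc de reprendre la d\'emonstration du Th\'eor\`eme~\ref{Theoreme:rang1} en rempla\c{c}ant les valeurs absolues r\'eelles des conjugu\'es par les modules complexes. On part de la factorisation $F_a(x,y)=a_0\prod_i(x-\sigma_i(\alpha\epsilon^a)y)$, on isole un facteur $x-\sigma_i(\alpha\epsilon^a)y$ de petit module, puis, via une identit\'e de Siegel, on aboutit \`a une \'equation d'unit\'es contenant $\epsilon^a$. Le th\'eor\`eme de Baker sur les formes lin\'eaires en logarithmes (\'eventuellement complexes) de nombres alg\'ebriques fournit alors une borne effective sur $|a|$ en fonction de $\log m$; une fois $|a|$ born\'ee, la forme $F_a$ appartient \`a une famille finie explicite, et les bornes effectives de Thue, via Baker, fournissent le contr\^ole recherch\'e sur $\log|x|$ et $\log|y|$.

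La principale difficult\'e sera de v\'erifier que cette adaptation aux conjugu\'es complexes pr\'eserve le caract\`ere effectif des constantes, et que celles-ci continuent de ne d\'ependre que de $\alpha$ et $\epsilon$.

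\medskip
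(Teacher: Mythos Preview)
Your case split according to the signature of $K$ is exactly what the paper does, and in the totally real case your invocation of Th\'eor\`eme~\ref{Theoreme:rang1} is correct and identical to the paper's argument.

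For the signature $(1,1)$ case, however, the paper does \emph{not} adapt the proof of Th\'eor\`eme~\ref{Theoreme:rang1}; it simply cites \cite{LW-Balu}, where the authors treat the ``almost totally imaginary'' situation (number fields with at most one real embedding). Your proposal to ``reprendre la d\'emonstration du Th\'eor\`eme~\ref{Theoreme:rang1} en rempla\c{c}ant les valeurs absolues r\'eelles par les modules complexes'' runs into a genuine obstacle: the proof of Th\'eor\`eme~\ref{Theoreme:rang1} uses the totally real hypothesis not merely to force the unit group to have rank~$1$, but at specific algebraic junctures. For instance, in the seventh step one passes from $|\sigma_\alpha(\epsilon)\tau_\alpha(\epsilon)|=|\tau_\beta(\epsilon)|^2$ to $\sigma_\alpha(\epsilon)\tau_\alpha(\epsilon)=\pm\tau_\beta(\epsilon)^2$; this implication ``equal moduli $\Rightarrow$ equal up to sign'' is peculiar to real numbers and fails outright for complex conjugates. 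Analogous deductions appear in the third and fifth steps (e.g.\ $\varphi(\epsilon)\neq\pm\tau_\alpha(\epsilon)$ together with $\epsilon$ totally real gives $|\varphi(\epsilon)|>|\tau_\alpha(\epsilon)|$). A mechanical transcription to complex moduli would therefore break down at these points.

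Your broader plan---isolate a small factor, write a Siegel identity, apply Baker to the resulting unit equation---is sound and is in fact the strategy of \cite{LW-Balu}; but carrying it out in the complex case requires a separate argument, not a direct adaptation of the present paper's proof. So the honest statement is: your sketch is a correct outline of what \cite{LW-Balu} accomplishes, and the paper is right to invoke that reference rather than claim the adaptation is routine.
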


Ce corollaire se d\'eduit du th\'eor\`eme \ref{Theoreme:rang1} dans le cas o\`u le corps cubique $\Q(\alpha)$ est totalement r\'eel. Dans le cas contraire, le rang du groupe des unit\'es de $\Q(\alpha)$ est $1$, ce corps cubique n'admet qu'un plongement r\'eel, et le corollaire $\ref{Corollaire:cubiquetotalementreel}$ se d\'eduit alors des r\'esultats de \cite{LW-Balu}. 

La section $\ref{Section:LemmeElementaire}$ est consacr\'ee \`a un lemme \'el\'ementaire qui sera utilis\'e plusieurs fois dans la d\'emonstration. La d\'emonstration du th\'eor\`eme \ref{Theoreme:rang1} se trouve au \S $\ref{Section:demonstration}$. Au \S 4, nous donnerons des familles d'exemples. 

L'outil principal de notre texte est une in\'egalit\'e diophantienne \'enonc\'ee au lemme $\ref{VarianteLemme3}$, et la d\'emonstration ressemble \`a celle 
du lemme 3 de \cite{LW2}.

Le pr\'esent texte repose sur les r\'esultats et sur les d\'emonstrations de \cite{LW2}. Nous utilisons les notations de cet article, en pr\'ecisant quand nous devons les modifier. 
 
\section{Un lemme \'el\'ementaire}\label{Section:LemmeElementaire}

On utilisera le lemme suivant avec $t=4$, $5$ ou $6$. 

\begin{lemme}\label{Lemme:elementaire}
Soient $t$ un entier $\ge 3$, $x_1,\dots,x_t$ des nombres r\'eels, $\delta$ et $\mu$ deux nombres r\'eels positifs satisfaisant 
$$
0<\delta\le \frac{1}{t-2}-\frac{1}{\mu}\cdotp
$$
On suppose $x_1+\cdots+x_t=0$ et 
$$
|x_i|\le \delta \max\{|x_1|, |x_2|\}
\quad\hbox{pour}\quad i=3,\dots,t.
$$
Alors
$$
|x_1+x_2|
\le \mu\delta \min\{|x_1|, |x_2|\}.
$$
\end{lemme}

\begin{proof}[D\'emonstration]
Par sym\'etrie, on peut supposer $|x_1|\le |x_2|$. Si $x_1=0$, alors les hypoth\`eses impliquent $x_2=0$ et le lemme est vrai. Supposons donc $x_1\not=0$. 
Posons
$$
s=\frac{x_3+\cdots+x_t}{x_2}\cdotp
$$
On a $x_1=-x_2(1+s)$, d'o\`u on d\'eduit 
$$
\frac{x_2}{x_1}+1=\frac{s}{1+s}\cdotp
$$
Comme 
$$
|s|\le (t-2)\delta<1,
$$
on peut \'ecrire
$$
\left|\frac{s}{1+s}\right|\le \frac{|s|}{1-|s|} \le \frac{(t-2)\delta}{1-(t-2)\delta}\le 
\mu\delta.
$$ 
\end{proof}
 
Nous utiliserons ce lemme \ref{Lemme:elementaire} avec $\mu=t$, de sorte que l'hypoth\`ese sur $\delta$ devient
$$
0<\delta\le \frac{2}{t(t-2)}\cdotp
$$
D'autres choix sont licites mais ne modifient pas le r\'esultat, dans la mesure o\`u nous ne cherchons pas \`a expliciter les constantes. 

\section{D\'emonstration du th\'eor\`eme \ref{Theoreme:rang1}}\label{Section:demonstration}

Dans cette section, on suppose que le corps $K=\Q(\alpha)$ est totalement r\'eel mais on ne se restreint pas au cas cubique. 
On utilise les notations et r\'esultats de \cite{LW2}, \`a ceci pr\`es que, par rapport \`a \cite{LW2}, nous rempla\c{c}ons les notations $\tau_a$ par $\tau_\alpha$, $\sigma_a$ par $\sigma_\alpha$, $T_a(\thetanu)$ par $T_\alpha(\thetanu)$, $\Sigma_a(\thetanu)$ par $\Sigma_\alpha(\thetanu)$. Il n'y aurait pas de conflit de notations \`a conserver les notations $\tau_b$, $\sigma_b$, $T_b(\thetanu)$ et $\Sigma_b(\thetanu)$, mais par souci de sym\'etrie nous rempla\c{c}ons les indices $b$ par des $\beta$. 

Ainsi nous notons $\sigma_\alpha$ (resp{.} $\sigma_\beta$) un plongement de  $K$ dans $\C$ tel que $|\sigma_\alpha(\alpha\varepsilon)|$ (resp{.} $|\sigma_\beta(\beta)|$) soit maximal parmi les \'el\'ements   $|\varphi(\alpha\varepsilon)|$ (resp{.} parmi les \'el\'ements  $|\varphi(\beta)|$) pour $\varphi\in\Phi$. Donc
$$
|\sigma_\alpha(\alpha\varepsilon)|=\house{\alpha\varepsilon}
\quad
\hbox{et}
\quad
|\sigma_\beta(\beta)|=\house{\beta}\; .
$$ 
Ensuite nous d\'esignons par $\tau_\alpha$ (resp{.} $\tau_\beta$) un plongement de  $K$ dans $\C$ tel que  $|\tau_\alpha(\alpha\varepsilon)|$ (resp{.} $|\tau_\beta(\beta)|$) soit minimal
parmi les \'el\'ements $|\varphi(\alpha\varepsilon)|$ (resp{.} parmi les \'el\'ements  $|\varphi(\beta)|$) pour $\varphi\in\Phi$.
Donc\footnote{Ceci corrige deux fautes de frappe \`a la page 128 de \cite{LW2}}
$$
\left|\tau_\alpha\left((\alpha\varepsilon)^{-1}\right)\right|=
\house{ {(\alpha\varepsilon)^{-1}}}
\quad
\hbox{et}
\quad
\left|\tau_\beta\left(\beta^{-1}\right)\right|=
\house{
\beta^{-1}} \ .
$$ 
Soit $\thetanu$ un nombre r\'eel dans l'intervalle ouvert 
$]0,1[$. 
Nous d\'esignons par  $\Sigma_\alpha(\thetanu)$, \, $\Sigma_\beta(\thetanu), $\, $T_\alpha(\thetanu), $ \, $ T_\beta(\thetanu)$\,\, 
les ensembles de plongements de  $K$ dans $\C$ satisfaisant les conditions 
$$
\left\{
\begin{array}{lll}
\Sigma_\alpha(\thetanu)&=&\left\{
\varphi\in\Phi\; 
\mid
|\sigma_\alpha(\alpha\varepsilon)|^\thetanu \le |\varphi(\alpha\varepsilon)|\le |\sigma_\alpha(\alpha\varepsilon)|
\right\},
\\ [2mm]
\Sigma_\beta(\thetanu)&=&\left\{
\varphi\in\Phi\; 
\mid
|\sigma_\beta(\beta)|^\thetanu \le |\varphi(\beta)|\le |\sigma_\beta(\beta)|
\right\},
 \\ [2mm]
T_\alpha(\thetanu)&=&\left\{
\varphi\in\Phi\; 
\mid
|\tau_\alpha(\alpha\varepsilon)|\le |\varphi(\alpha\varepsilon)|\le |\tau_\alpha(\alpha\varepsilon)|^\thetanu 
\right\},
\\ [2mm]
T_\beta(\thetanu)&=&\left\{
\varphi\in\Phi\; 
\mid
|\tau_\beta(\beta)|\le |\varphi(\beta)|\le |\tau_\beta(\beta)|^\thetanu 
\right\}.
\end{array} \right.
$$
On se place sous les hypoth\`eses de la conjecture $\ref{ConjecturePrincipale}$, mais en se restreignant \`a un sous-groupe de rang $1$ du groupe des unit\'es de $K=\Q(\alpha)$. On fixe donc une unit\'e $\epsilon$ d'ordre infini de $K$ (ce qui implique $[K:\Q]\ge [\Q(\alpha\epsilon):\Q] \ge 3$) et l'\'el\'ement $\varepsilon$ de \cite{LW2} devient ici $\epsilon^a$. On suppose de plus que nous sommes dans une situation o\`u l'unit\'e $\epsilon$ est totalement r\'eelle. Les constantes $\kappa_i$ qui suivent ne d\'ependent que de $\alpha$ et $\epsilon$. On utilisera le lemme $\ref{VarianteLemme3}$ avec diff\'erentes valeurs de $\lambda$ qui pourront d\'ependre de $\alpha$ et $\epsilon$, mais qui seront ind\'ependantes de $a$, $x$, $y$ et $m$. 
 
\medskip
\indent
{\tt Premi\`ere \'etape.}
On consid\`ere un quadruplet $(x,y,a,m)$ d'entiers v\'erifiant 
$$
|F_a(x,y)|\le m \; \mbox{ avec } \; 
xy\not=0,\quad
\Q(\alpha\epsilon^a)=K,\quad
\quad m\ge 2.
$$
Quitte \`a remplacer $\alpha$ par $\alpha^{-1}$, $\epsilon$ par $\epsilon^{-1}$ et \`a permuter $x$ et $y$, on peut supposer
$$
1\le |x|\le |y|
\quad \hbox{et}\quad 
a\ge 0.
$$
Comme dans \cite{LW2}, on d\'esigne par $\epsilon_1,\dots,\epsilon_r$ des unit\'es de $K$ dont les classes mo\-du\-lo $K^\times_{\tors}$ forment une base du groupe ab\'elien libre $\ZKtimes/K^\times_{\tors}$.
Au \S 3 de \cite{LW2}, on a introduit deux param\`etres $A$ et $B$ reli\'es \`a la hauteur logarithmique absolue de $\alpha\epsilon^a$ et de $\beta=x-\alpha\epsilon^ay$ respectivement. Pour cela on a \'ecrit 
$$ 
\varepsilon=\zeta \epsilon_1^{a_1}\cdots\epsilon_r^{a_r},
\quad
\beta=\rho \epsilon_1^{b_1}\cdots\epsilon_r^{b_r}
$$ 
avec des entiers rationnels $a_1,\dots,a_r$, $b_1,\ldots,b_r$, avec $\zeta\in K^\times_{\tors}$ et avec $\rho \in\ZK$ 
v\'erifiant $\rmh(\rho )\le \Newcst{ell} \log m$. Alors
$$
A=\max\{1,|a_1|, \dots,|a_r|\}
\quad
\hbox{et}
\quad
B= \max\bigl\{1,\; |b_1|, |b_2|,\ldots, \; |b_r|\bigr\}.
$$
Ici, on peut remplacer $A$ par $\Newcst{kappa:A} a$. 
Les lemmes 4 et 5 de \cite{LW2} donnent 
$$
\Newcst{kappa:majA} A\le B \le \Newcst{kappa:minA} A
\quad
\hbox{et}\quad
|y|\le e^{\Newcst{kappa:majy} B}.
$$
Nous allons voir que si 
 $ \Newcst{kappa:LW2a}$ d\'esigne une constante suffisamment grande,
 les minorations 
\begin{equation}\label{Equation:MinorationAetB}
A\ge \cst{kappa:LW2a}\log m\quad \hbox{et}\quad B\ge \cst{kappa:LW2a}\log m
\end{equation}
(cf. \'equation (8) de \cite{LW2}) entra\^{\i}nent une contradiction. Cette contradiction terminera donc la d\'emonstration. 

Les \'equations (9) de \cite{LW2} donnent ici qu'il existe des constantes positives effectivement calculables $\Newcst{kappa:majsigma}$ et $\Newcst{kappa:minsigma}$, ne d\'ependant que de $\alpha$ et $\epsilon$, telles que 
 \begin{equation}
 \label{Equation:MajMinsigma}
\left\{
\begin{array}{lllll}
e^{\cst{kappa:minsigma} A}
&\leq& |\sigma_\alpha(\alpha\epsilon^a)| &\leq& e^{\cst{kappa:majsigma} A}, \\ [1mm]
e^{\cst{kappa:minsigma} B}
&\leq& |\sigma_\beta(\beta)| &\leq& e^{\cst{kappa:majsigma} B}, \\ [1mm]
e^{-\cst{kappa:majsigma} A} 
 &\leq& |\tau_\alpha(\alpha\epsilon^a)|&\leq& e^{- \cst{kappa:minsigma} A},\\ [1mm]
e^{-\cst{kappa:majsigma} B} 
 &\leq& |\tau_\beta(\beta)|&\leq& e^{- \cst{kappa:minsigma} B}.\\
\end{array}\right.
\end{equation} 
 Comme $\alpha$ est diff\'erent de $\pm1$, un de ses conjugu\'es est en valeur absolue $>1$, un autre est en valeur absolue $<1$; donc 
\begin{equation}
\label{Equation:MajMintaualphasigmaalpha}
|\tau_\alpha(\epsilon)|<1<|\sigma_\alpha(\epsilon)|.
\end{equation}
Les in\'egalit\'es dans ($\ref{Equation:MajMinsigma}$) concernant $\tau_\alpha $ et $\sigma_\alpha$ sont \'equivalentes \`a  celles de ($\ref{Equation:MajMintaualphasigmaalpha}$).
 
\medskip
 
\indent
{\tt Deuxi\`eme \'etape.} 
L'\'enonc\'e qui suit est une variante du lemme 3 de \cite{LW2}, dans lequel $\lambda=-1 $. 

\begin{lemme}\label{VarianteLemme3}
Soit $\lambda$ un \'el\'ement non nul de $\Q(\alpha)$. 
Il existe une constante po\-si\-tive effectivement calculable $\Newcst{kappa:lemme}$, d\'ependant non seulement de $\alpha$ et $\epsilon$ mais aussi de $\lambda$, ayant la propri\'et\'e suivante. 
Soient $\varphi_1,\varphi_2,\varphi_3,\varphi_4$ des \'el\'ements de $\Phi$ tels que $\varphi_1(\alpha\epsilon^a)\varphi_2(\beta) \not=-\lambda \varphi_3(\alpha\epsilon^a)\varphi_4(\beta)$. Alors 
$$
\left|
 \frac{\varphi_1(\alpha\epsilon^a)\varphi_2(\beta) }
 {\varphi_3(\alpha\epsilon^a)\varphi_4(\beta)} 
 +\lambda\right|\ge 
 \exp\left(
- \cst{kappa:lemme} 
 (\log m)\log
 \left(2+
 \frac{A+B}{\log m}\right)
 \right). 
$$
 \end{lemme}

\begin{proof}[D\'emonstration]
La d\'emonstration ressemble \`a celle du lemme 3 de \cite{LW2}.  On \'ecrit 
$$
- \frac{1}{\lambda}\cdot 
 \frac{\varphi_1(\alpha\epsilon^a)\varphi_2(\beta) }
 { \varphi_3(\alpha\epsilon^a)\varphi_4(\beta)} 
$$
 sous la forme $\gamma_1^{c_1}\cdots \gamma_s^{c_s}$ avec $s=r+2$, et
$$ \begin{array}{rcl}
 \gamma_{j}=\displaystyle \frac{ \varphi_2(\epsilon_j)}{\varphi_4(\epsilon_j)},&
c_{j}=b_j
 \quad (j=1,\dots, r), &
 \gamma_{r+1}=\displaystyle \frac{ \varphi_1(\epsilon)}{\varphi_3(\epsilon)},\\ [3mm]
c_{r+1}=a, &\gamma_s=\displaystyle - \frac{1}{\lambda}\cdot \frac{\varphi_1(\alpha\zeta)\varphi_2(\rho)} {\varphi_3(\alpha\zeta)\varphi_4(\rho)},&
c_s=1.
\end{array}
$$ 
On a $\rmh(\gamma_s)\le \Newcst{hauteurgammas} \log m$.
On utilise enfin la proposition 2 de \cite{LW2} avec 
$$ 
 H_1=\cdots=H_{r+1}=\Newcst{HiLemmeStrategie}, \quad H_s=\cst{HiLemmeStrategie} \log m, \quad C=2+
 \frac{A+B }{\log m}\cdotp
$$
 Le lemme $\ref{VarianteLemme3}$ en r\'esulte.
\end{proof}
 
\medskip

\goodbreak 

\indent
{\tt Troisi\`eme \'etape.} 
Montrons que l'on a $\tau_\alpha \not= \tau_\beta $.

Supposons $\tau_\alpha = \tau_\beta $.
On d\'esigne par $\varphi$ un \'el\'ement de $\Phi$ distinct de $\sigma_\alpha$ tel que $ \varphi(\epsilon)\not=\pm \tau_\alpha(\epsilon)$. L'existence de $\varphi$ est claire si 
$[\Q(\epsilon):\Q]\ge 4$; elle est vraie aussi si $\Q(\epsilon)$ est un corps cubique: dans ce cas il suffit de prendre $\varphi\not=\sigma_\alpha$ et $\varphi\not=\tau_\alpha$ car la somme de deux conjugu\'es d'un nombre alg\'ebrique de degr\'e impair n'est jamais nulle\footnote{Si un nombre alg\'ebrique non nul $\gamma$ est conjugu\'e de $-\gamma$, le polyn\^ome irr\'eductible $P$ de $\gamma$ sur $\Q$ s'annule au point $-\gamma$, donc $P(X)=\pm P(-X)$. Comme $P(0)\not=0$, on a $P(X)=P(-X)$, par cons\'equent il existe un polyn\^ome $Q$ tel que $P(X)=Q(X^2)$. En particulier, le degr\'e de $P$ est pair.}. 

\indent D'apr\`es le lemme 6 de \cite{LW2}, on peut supposer $\varphi\not\in T_\beta(\thetanu)$. D'apr\`es le corollaire 1 de \cite{LW2}, on peut supposer $\varphi\not\in \Sigma_\alpha(\thetanu)$. 
On \'ecrit maintenant l'\'equation ($7$) de \cite{LW2}
\begin{equation}\label{Equation:SommeSixTermes}
u_1v_2-u_1v_3+u_2v_3-u_2v_1+u_3v_1-u_3v_2=0
\end{equation} 
 avec
$$\left\{ \begin{array}{rcl}
u_1=\varphi(\alpha\epsilon^a),&
u_2=\sigma_\alpha(\alpha\epsilon^a), &
u_3=\tau_\alpha(\alpha\epsilon^a),\\ [2mm] 
v_1=\varphi(\beta), &
v_2=\sigma_\alpha(\beta), &
v_3=\tau_\alpha(\beta).
\end{array} \right.
$$

On utilise les in\'egalit\'es de ($\ref{Equation:MajMinsigma}$) et ($\ref{Equation:MajMintaualphasigmaalpha}$). On en d\'eduit d'abord
$$
\frac{|u_3|}{|u_2|}=
 \frac{|\tau_\alpha(\alpha\epsilon^a)|}{|\sigma_\alpha(\alpha\epsilon^a)|}\le e^{-\Newcst{kappa:LW2b} A}.
$$
Comme $ \varphi(\epsilon)\not=\pm \tau_\alpha(\epsilon)$ et que l'unit\'e $\epsilon$ est totalement r\'eelle, on a
$$ 
|\varphi(\epsilon)|>|\tau_\alpha(\epsilon)|,
$$
d'o\`u
$$
\frac{|u_3|}{|u_1|}=
\frac{|\tau_\alpha(\alpha\epsilon^a) |}
 {|\varphi(\alpha\epsilon^a)|} =
\frac{|\tau_\alpha(\alpha ) |}
 {|\varphi(\alpha )|} 
 \cdot
 \left(
\frac{|\tau_\alpha(\epsilon ) |}
 {|\varphi( \epsilon )|} \right)^a
\le e^{- \Newcst{kappa:u3/u1a} A}.
$$
Comme $\tau_\alpha = \tau_\beta $ et que $\varphi\not\in T_\beta(\thetanu)$ et $\sigma_\alpha\not\in T_\beta(\thetanu)$, on a encore
$$
 \frac{|v_3|}{|v_1|}=
 \frac{|\tau_\beta(\beta)|}{|\varphi(\beta)|}\le e^{-\Newcst{kappa:v3/v1} B}
 \quad\hbox{et}\quad 
 \frac{|v_3|}{|v_2|}=
 \frac{|\tau_\beta(\beta) |}
 {|\sigma_\alpha(\beta)|} \le e^{-\Newcst{kappa:v3/v2} B}.
 $$
On veut utiliser le lemme \ref{Lemme:elementaire} avec $t=\mu=6$, $\delta\leq \frac{1}{12}\cdotp$ 
On a $6 $ termes $\pm u_i v_j \quad ( i\neq j)$ de somme nulle. On prend pour $x_1$ et $x_2$ les deux termes $u_1v_2$ et $-u_2v_1$,
respectivement. Il nous faut donc nous assurer que 
$$
\frac{|x_i|}{\max \{ |x_1|,|x_2|\}} \leq \delta,\qquad (i=3,4,5,6).
$$
En utilisant les majorations que nous venons d'\'etablir pour les modules de 
$$
\frac{u_3v_1}{u_2v_1}=\frac{u_3}{u_2},\quad
\frac{u_3v_2}{u_1v_2}=\frac{u_3}{u_1},\quad
\frac{u_1v_3}{u_1v_2}=\frac{v_3}{v_2},\quad
\frac{u_2v_3}{u_2v_1}=\frac{v_3}{v_1},
$$ 
le lemme \ref{Lemme:elementaire} avec $t=6$ nous donne alors une borne sup\'erieure pour $|x_1+x_2|$, \`a savoir
$$
|x_1+x_2| \leq 6\delta \min \{|x_1|,|x_2|\} ,
$$
de sorte que
$$
\left|\frac{x_1}{x_2}+1\right| \leq 6 \delta.
$$

\noindent Gr\^ace \`a l'\'egalit\'e
$$
\frac{u_1v_2}{u_2v_1}=
 \frac{\varphi(\alpha\epsilon^a)\sigma_\alpha(\beta) }
 {\sigma_\alpha(\alpha\epsilon^a)\varphi(\beta)},
 $$
 on d\'eduit du lemme \ref{Lemme:elementaire} que 
 $$
\left| 
 \frac{\varphi(\alpha\epsilon^a)\sigma_\alpha(\beta) }
 {\sigma_\alpha(\alpha\epsilon^a)\varphi(\beta)} -1\right|\le e^{-\Newcst{kappa:u1v2/u2v1} \min\{A,B\}}.
$$
 Utilisons maintenant le lemme $\ref{VarianteLemme3}$ avec $\lambda=-1$ et
 avec $\varphi_1=\varphi_4=\varphi$, $\varphi_2=\varphi_3=\sigma_\alpha$. L'hypoth\`ese 
$$
\varphi(\alpha\epsilon^a)\sigma_\alpha(\beta) \not= \sigma_\alpha(\alpha\epsilon^a)\varphi(\beta)
$$
de ce lemme $\ref{VarianteLemme3}$ est v\'erifi\'ee: c'est la remarque juste avant le lemme 3 de \cite{LW2} avec $\sigma=\sigma_\alpha$. 
On obtient alors
$$
\left| 
 \frac{\varphi(\alpha\epsilon^a)\sigma_\alpha(\beta) }
 {\sigma_\alpha(\alpha\epsilon^a)\varphi(\beta)} -1\right|\ge 
 \exp\left\{
- \Newcst{kappa:alphaepsilona}
 (\log m)\log
 \left(2+
 \frac{A+B}{\log m}\right)
 \right\}
$$
et par cons\'equent,
$$ 
\min\{A,B\}\le \Newcst{kappa:minAB}
 (\log m)\log
 \left(2+
 \frac{A+B}{\log m}\right),
$$
ce qui donne une contradiction lorsque la
 constante $\cst{kappa:LW2a}$ de ($\ref{Equation:MinorationAetB}$) est suffisamment grande. Donc nous avons $ \tau_\alpha \not= \tau_\beta$.
 
 \medskip

\indent 
{\tt Quatri\`eme \'etape.}
Montrons\footnote{La deuxi\`eme \'etape nous a permis de supposer $\tau_\alpha \not= \tau_\beta $, mais cela n'implique pas $\tau_\alpha(\epsilon) \not= \tau_\beta(\epsilon) $ car nous n'avons pas suppos\'e que $\epsilon$ \'etait un g\'en\'erateur du corps de nombres $K=\Q(\alpha)$.
} que nous avons
 $\tau_\alpha(\epsilon)\not =\pm \tau_\beta(\epsilon)$. 

\noindent
On \'ecrit l'\'equation de Siegel (\'equation ($7$) de \cite{LW2}) pour les trois plongements $\tau_\beta$, $\sigma_\alpha$ et $\tau_\alpha$. Autrement dit, on pose
$$\left\{
\begin{array}{rcl}
u_1=\tau_\beta(\alpha\epsilon^a), &
u_2=\sigma_\alpha(\alpha\epsilon^a), &
u_3=\tau_\alpha(\alpha\epsilon^a),\\ [2mm] 
v_1=\tau_\beta(\beta),&
v_2=\sigma_\alpha(\beta), &
v_3=\tau_\alpha(\beta).
\end{array} \right.
$$
La relation ($\ref{Equation:SommeSixTermes}$) est encore v\'erifi\'ee. 
On conserve ces notations pour toute la suite de la d\'emonstration.
On d\'eduit du lemme 7 de \cite{LW2} que $\tau_\beta\not\in\Sigma_\alpha(\thetanu)$; 
donc 
$$
|\tau_\beta(\epsilon)|<|\sigma_\alpha(\epsilon)|
$$ 
et par cons\'equent
$$
 \frac{|u_1|}{|u_2|}=
 \frac{|\tau_\beta(\alpha \epsilon^a)|}{|\sigma_\alpha(\alpha \epsilon^a)|}\le e^{-\Newcst{kappa:u1/u2} A}.
$$
Comme $\tau_\alpha\not=\tau_\beta$ (deuxi\`eme \'etape), le lemme 6 de \cite{LW2} livre $\tau_\alpha\not\in T_\beta(\thetanu)=\{\tau_\beta\}$; d'o\`u 
$$ 
 \frac{|v_1|}{|v_3|}=
 \frac{|\tau_\beta(\beta)|}{|\tau_\alpha(\beta)|}\le e^{-\Newcst{kappa:v1/v3} B}.
$$ 
On en d\'eduit
$$
 \frac{|u_3v_1|}{|u_2v_3|}=
 \frac{|\tau_\alpha(\alpha\epsilon^a) \tau_\beta(\beta) |}
 {|\sigma_\alpha(\alpha\epsilon^a)\tau_\alpha(\beta)|}
 \le 
 e^{-\Newcst{kappa:u3v_1/u2v3} (A+B)}.
 $$ 
 
Supposons maintenant $\tau_\alpha(\epsilon)=s_1 \tau_\beta(\epsilon)$ avec $s_1\in\{-1, 1\}$.
On a $\tau_\alpha(\alpha) u_1=s_1^a\tau_\beta(\alpha) u_3$. 
Posons 
$$
\lambda_1 = -1+s_1^a \frac{\tau_\beta(\alpha)}{\tau_\alpha(\alpha)},
$$
de sorte que $u_1-u_3=\lambda_1 u_3$. Comme $\tau_\alpha\not= \tau_\beta$, on a $u_1\not=u_3$, donc $\lambda_1\not=0$. 
 L'\'equation ($\ref{Equation:SommeSixTermes}$) devient
\begin{equation}\label{Equation:SommeSixTermesbis}
u_2v_3+\lambda_1 u_3v_2-u_1v_3-u_2v_1+u_3v_1=0. 
\end{equation}
On utilise le lemme \ref{Lemme:elementaire} avec $t=5$.
Les deux premiers termes $u_2v_3$ et $\lambda_1 u_3v_2$ sont $x_1$ et $x_2$. Il s'agit de majorer, pour chaque $i=3,4,5$, soit $|x_i| / |x_1|$, soit $|x_i| / |x_2|$, au choix. Il s'av\`ere que nous avons major\'e pr\'ec\'edemment les modules de 
$$
\frac{u_1v_3}{u_2v_3}=\frac{u_1}{u_2},
\quad
\frac{u_2v_1}{u_2v_3}=\frac{v_1}{v_3},
\quad
\frac{u_3v_1}{u_2v_3} \cdotp
$$
On d\'eduit 
$$
|u_2v_3+\lambda_1 u_3v_2|\le e^{-\Newcst{kappa:u3v2+u2v3} \min\{A,B\}}|u_2v_3|.
$$
Comme $\lambda_1\not=0$, on peut \'ecrire la majoration pr\'ec\'edente sous la forme
$$
\left|\frac{u_2v_3}{u_3v_2}+ \lambda_1 \right|
\le e^{-\Newcst{kappa:u3v2/u2v3} \min\{A,B\}}. 
$$

\indent 
La conclusion du lemme $\ref{VarianteLemme3}$ (que l'on utilise avec $\lambda=\lambda_1$) n'est pas compatible avec cette majoration; donc l'hypoth\`ese de ce lemme selon laquelle le membre de gauche est non nul n'est pas satisfaite. Autrement dit,
$$
u_2v_3+ \lambda_1 u_3 v_2=0.
$$ 
Dans l'\'equation ($\ref{Equation:SommeSixTermesbis}$), une somme 
de deux des cinq termes du membre de gauche \'etant nulle, 
la somme des trois autres termes est \'egalement nulle: 
$$
u_1v_3+u_2v_1-u_3v_1=0. 
$$
 Or on a 
$$
\left|
\frac{u_1v_3}{u_2v_1}+1\right|=
\frac{|u_3v_1|}{|u_2v_1|}=\frac{|u_3|}{|u_2|}=
\frac{|\tau_\alpha(\alpha\epsilon^a) |}
 {|\sigma_\alpha(\alpha\epsilon^a)|} 
\le e^{- \Newcst{kappa:u3/u2} A}
$$
avec
$$
\frac{u_1v_3}{u_2v_1}=
 \frac{\tau_\beta(\alpha\epsilon^a)\tau_\alpha(\beta) }
 {\sigma_\alpha(\alpha\epsilon^a)\tau_\beta(\beta)}\cdotp
 $$
Par cons\'equent, 
$$
\left|
 \frac{\tau_\beta(\alpha\epsilon^a)\tau_\alpha(\beta) }
 {\sigma_\alpha(\alpha\epsilon^a)\tau_\beta(\beta)
 }
 +1\right|\le e^{-\Newcst{kappa:taubetaalphaepsilona} \min \{A,B\}}.
 $$
Utilisant encore une fois le lemme $\ref{VarianteLemme3}$ avec $\lambda=+1$, ainsi que la formule ($\ref{Equation:MinorationAetB}$) avec une constante $\cst{kappa:LW2a}$ suffisamment grande, 
on en d\'eduit $u_1v_3+u_2v_1=0$,
d'o\`u $u_3v_1=0$, ce qui n'est pas possible. Nous avons donc 				
$\tau_\alpha(\epsilon)\not =\pm \tau_\beta(\epsilon)$. 

\medskip
\indent 
{\tt Cinqui\`eme \'etape.}
Montrons que l'on a 
\begin{equation}\label{Equation:Etape5}
\sigma_\alpha(\alpha\epsilon^a)\tau_\alpha(\beta) +\tau_\beta(\alpha\epsilon^a)\sigma_\alpha(\beta)=0.
\end{equation}

La d\'efinition de $\tau_\alpha$ implique $|\tau_\alpha(\alpha\epsilon^a)|\le |\tau_\beta(\alpha\epsilon^a)|$. La quatri\`eme 
 \'etape implique 
$\tau_\alpha(\epsilon^a)\not= \pm \tau_\beta(\epsilon^a)$. 
Comme l'unit\'e $\epsilon$ est totalement r\'eelle et que $\tau_\alpha(\epsilon)\not=\pm \tau_\beta(\epsilon)$, 
on d\'eduit
\begin{equation}\label{Equation:MajtauaMinsigmaa}
|\tau_\beta(\epsilon)|>|\tau_\alpha(\epsilon)|.
\end{equation}
De l'in\'egalit\'e ($\ref{Equation:MajtauaMinsigmaa}$) on d\'eduit 
$$
\frac{|u_3|}{|u_1|}=
\frac{|\tau_\alpha(\alpha\epsilon^a) |}
 {|\tau_\beta(\alpha\epsilon^a)|} =
\frac{|\tau_\alpha(\alpha) |}
 {|\tau_\beta(\alpha )|} \cdot
 \left(
\frac{|\tau_\alpha(\epsilon ) |}
 {|\tau_\beta( \epsilon )|} \right)^a
\le e^{- \Newcst{kappa:u3/u1b} A}.
$$ 
Consid\'erons
$$
\frac{u_2v_3}{u_1v_2}=\frac{\sigma_\alpha(\alpha\epsilon^a)\tau_\alpha(\beta) }{\tau_\beta(\alpha\epsilon^a)\sigma_\alpha(\beta)}\cdotp
$$
On utilise l'\'egalit\'e ($\ref{Equation:SommeSixTermes}$), les estimations du d\'ebut de la quatri\`eme \'etape, ainsi que le lemme \ref{Lemme:elementaire} avec $t=6$ pour d\'eduire
$$
\left|
\frac{\sigma_\alpha(\alpha\epsilon^a)\tau_\alpha(\beta) }{\tau_\beta(\alpha\epsilon^a)\sigma_\alpha(\beta)}
+1\right|\le e^{- \Newcst{kappa:sigmaalphaepsiloatau} \min \{A,B\}}.
$$ 
Gr\^ace une nouvelle fois au lemme $\ref{VarianteLemme3}$ avec $\lambda=+1$, $\varphi_1=\varphi_4=\sigma_\alpha$, $\varphi_2=\tau_\alpha$, $\varphi_3=\tau_\beta$, en utilisant 
 ($\ref{Equation:MinorationAetB}$) avec une constante $\cst{kappa:LW2a}$ suffisamment grande, on en d\'eduit 
$$
u_2v_3+u_1v_2=0,
$$
ce qui est ($\ref{Equation:Etape5}$).

\medskip
\indent 
{\tt Sixi\`eme \'etape.}
Montrons que l'on a 
\begin{equation}\label {Equation:Etape6}
|\sigma_\alpha(\epsilon)\tau_\alpha(\epsilon)| =|\tau_\beta( \epsilon)|^2.
\end{equation}
La cinqui\`eme \'etape montre qu'une somme de deux termes dans le membre de gauche de l'\'equation 
($\ref{Equation:SommeSixTermes}$) 
 est nulle, donc la somme des quatre autres termes est \'egalement nulle:
\begin{equation}\label{Equation:Siegel4}
u_2v_1+u_1v_3+u_3v_2-u_3v_1=0,
\end{equation}
c'est-\`a-dire 
$$
\sigma_\alpha(\alpha\epsilon^a)
\tau_\beta(\beta)
+
\tau_\beta(\alpha\epsilon^a)
\tau_\alpha(\beta)
+
\tau_\alpha(\alpha\epsilon^a)
\sigma_\alpha(\beta)
-
\tau_\alpha(\alpha\epsilon^a)
\tau_\beta(\beta)
=0.
$$
On suppose $|\sigma_\alpha(\epsilon)\tau_\alpha(\epsilon)|\not =|\tau_\beta( \epsilon)|^2$. 
On pose 
$$
x_1=u_2v_1=\sigma_\alpha(\alpha\epsilon^a)
\tau_\beta(\beta), \quad
x_4=-u_3v_1=-
\tau_\alpha(\alpha\epsilon^a)
\tau_\beta(\beta),
$$
et 
$$
(x_2, \; x_3)=\left\{
\begin{array}{lll}
(\tau_\alpha(\alpha\epsilon^a)
\sigma_\alpha(\beta))
,\;
\tau_\beta(\alpha\epsilon^a)
\tau_\alpha(\beta) 
&\mbox{si}&|\sigma_\alpha(\epsilon)\tau_\alpha(\epsilon)|>|\tau_\beta( \epsilon)|^2,
\\ [2mm] 
(\tau_\beta(\alpha\epsilon^a)
\tau_\alpha(\beta)
,\;
\tau_\alpha(\alpha\epsilon^a)
\sigma_\alpha(\beta))
& \mbox{si} &|\sigma_\alpha(\epsilon)\tau_\alpha(\epsilon)|<|\tau_\beta( \epsilon)|^2.
\end{array} \right.
$$
On a $x_1+x_2+x_3+x_4=0$ et 
$$
\frac{|x_4|}{|x_1|}=\frac{|u_3|}{|u_2|}=
\frac{|\tau_\alpha(\alpha\epsilon^a)|}
{|\sigma_\alpha(\alpha\epsilon^a)|}
\le e^{-\Newcst{kappa:x4/x1}A}.
$$ 

On utilise ($\ref{Equation:Etape5}$).
Si $|\sigma_\alpha(\epsilon)\tau_\alpha(\epsilon)|>|\tau_\beta( \epsilon)|^2$, on a 
$$
\frac{|x_3|}{|x_2|}=
\frac{|\tau_\beta(\alpha)|^2}{|\sigma_\alpha(\alpha)\tau_\alpha(\alpha)|}
\left|
\frac{ \tau_\beta(\epsilon)^2}{ \sigma_\alpha(\epsilon)\tau_\alpha(\epsilon) }
\right|^a
\le e^{-\Newcst{kappa:x3/x2a}A}.
$$
Si $|\sigma_\alpha(\epsilon)\tau_\alpha(\epsilon)|<|\tau_\beta( \epsilon)|^2$, on a 
$$
\frac{|x_3|}{|x_2|}=
\frac{|\sigma_\alpha(\alpha)\tau_\alpha(\alpha)|}{|\tau_\beta(\alpha)|^2}
\left|
\frac{\sigma_\alpha(\epsilon)\tau_\alpha(\epsilon) }{ \tau_\beta(\epsilon)^2}
\right|^a
\le e^{-\Newcst{kappa:x3/x2b}A}.
$$
Dans les deux cas on peut utiliser le lemme \ref{Lemme:elementaire} avec $t=4$ pour en d\'eduire
$$
\left|
\frac{x_1}{x_2}+1
\right| \le e^{-\Newcst{kappa:x1/x2+1}A}.
$$
Gr\^ace encore une fois au lemme $\ref{VarianteLemme3}$ avec $\lambda=+1$, 
$\varphi_1= \sigma_\alpha$, $\varphi_2= \tau_\beta$, 
$$
\begin{cases}
\varphi_3= \tau_\alpha, \quad \varphi_4= \sigma_\alpha 
& \hbox{si $|\sigma_\alpha(\epsilon)\tau_\alpha(\epsilon)|>|\tau_\beta( \epsilon)|^2$},
\\
\varphi_3= \tau_\beta, \quad \varphi_4= \tau_\alpha
& \hbox{si $|\sigma_\alpha(\epsilon)\tau_\alpha(\epsilon)|<|\tau_\beta( \epsilon)|^2$}
\end{cases}
$$
et 
($\ref{Equation:MinorationAetB}$) avec une constante $\cst{kappa:LW2a}$ suffisamment grande, 
on en d\'eduit $x_1+x_2=0$. Mais alors $x_3+x_4=0$. Montrons que ce n'est pas possible. D'apr\`es le lemme 6 de \cite{LW2}, on a $\sigma_\alpha\not\in T_\beta(\thetanu)$. Utilisant ($\ref{Equation:MajtauaMinsigmaa}$), on trouve
$$
\frac{|x_4|}{|x_3|}\le \max\left\{
\frac{|v_1|}{|v_2|},\; 
\frac{|u_3v_1|}{|u_1v_3|}
\right\}
\le e^{-\Newcst{kappa:x4/x3} \min\{A,B\}}<1.
$$ 
Donc $x_3+x_4\neq0$.
Ceci d\'emontre ($\ref{Equation:Etape6}$). 

\goodbreak 

\medskip
\indent 
{\tt Septi\`eme \'etape.} Fin de la d\'emonstration. 

Gr\^ace \`a l'hypoth\`ese que $\epsilon$ est une unit\'e totalement r\'eelle, l'\'equation ($\ref{Equation:Etape6}$) s'\'ecrit
$$
\sigma_\alpha(\epsilon)\tau_\alpha(\epsilon) =s_2 \tau_\beta( \epsilon)^2
$$
avec $s_2\in\{ -1, 1\}$. 
En combinant avec l'\'equation ($\ref{Equation:Etape5}$)
$$
u_2v_3=-u_1v_2,
$$
on trouve
$$
\frac{u_1v_3}{u_3v_2}=-\frac{u_1^2}{u_2u_3}=
-\frac{\tau_\beta(\alpha\epsilon^a)^2}{\sigma_\alpha(\alpha\epsilon^a)\tau_\alpha(\alpha\epsilon^a)}=
-
s_2^a
\frac{\tau_\beta(\alpha)^2}{\sigma_\alpha(\alpha)\tau_\alpha(\alpha)},
$$
d'o\`u 
$$
u_1v_3+u_3v_2=\lambda_2 u_3v_2
\quad\hbox{avec}\quad 
\lambda_2=
1-
s_2^a
\frac{\tau_\beta(\alpha)^2}{\sigma_\alpha(\alpha)\tau_\alpha(\alpha)}
\cdotp
$$
Comme $u_2\not =u_3$, l'\'equation ($\ref{Equation:Siegel4}$), qui s'\'ecrit maintenant
$$
\frac{u_2v_1}{u_3v_2}+\lambda_2=\frac{v_1}{v_2},
$$ 
entra\^{\i}ne $\lambda_2\not=0$. 
Comme 
$$
\frac{u_2v_1}{u_3v_2}=
\frac{\sigma_\alpha(\alpha\epsilon^a)\tau_\beta(\beta)}{\tau_\alpha(\alpha\epsilon^a)\sigma_\alpha(\beta)}
$$
 et que 
$$
0<\frac{|v_1|}{|v_2|}=\frac{|\tau_\beta(\beta)|}{|\sigma_\alpha(\beta)|}\le e^{- \Newcst{kapppa:v1/v2}B},
$$
on peut utiliser une derni\`ere fois le lemme $\ref{VarianteLemme3}$ avec 
$$
\lambda=\lambda_2\neq 0, 
\varphi_1=\varphi_4=\sigma_\alpha, \varphi_2=\tau_\beta, \varphi_3=\tau_\alpha
$$
 pour 
obtenir la contradiction finale avec ($\ref{Equation:MinorationAetB}$).
Ceci termine la d\'emonstration du th\'eor\`eme \ref{Theoreme:rang1} .

\section{Familles d'exemples}

Soit $\alpha$ un nombre alg\'ebrique de degr\'e $d\ge 3$ et soit $\epsilon$ une unit\'e totalement r\'eelle $\not\in \{-1,1\}$ du corps de nombres $K=\Q(\alpha)$. 
D\'esignons par $\sfA$ l'ensemble des entiers $a\in\Z$ tels que $\alpha\epsilon^a$ est de degr\'e $d$. 
Le th\'eor\`eme \ref{Theoreme:rang1} donne la majoration 
$$
\max\{\log|x|, \; \log|y|,\; |a|\} \leq \cst{kappaTheoreme} \log m
$$
pour tout triplet $(a,x,y)\in \sfA\times \Z\times \Z$ satisfaisant 
$$
 |F_a(x,y)|\leq m, \;\mbox{ avec } \; xy\neq 0.
$$
Pour $a\in\Z$, la forme binaire $F_a$ s'\'ecrit 
$$
\begin{array}{ll}
&F_a(X,Y)
\,=\,\prod_{i=1}^d \bigl(X-\sigma_i(\alpha\epsilon^a)Y\bigr)
\\ [2mm]
&=\,X^d-U_1(a) X^{d-1}Y+\cdots+(-1)^{d-1}U_{d-1}(a) XY^{d-1}+(-1)^d U_d(a)Y^d,
\end{array}
$$
o\`u $\sigma_1,\dots,\sigma_d$ d\'esignent les \'el\'ements de l'ensemble $\Phi$ des plongements de $K$ dans $\C$.
 
Les coefficients $U_1(a),\dots,U_{d}(a)$ des formes binaires $F_a$ sont donn\'es par
$$
U_h(a)=\sum_{1\le j_1<\cdots<j_h\le d} \sigma_{j_1}(\alpha\epsilon^a)\cdots\sigma_{j_h}(\alpha\epsilon^a)
\qquad (h=1,\dots,d).
$$
En particulier, 
$$
U_d(a)= \rmNorm_{K/\Q}(\alpha \epsilon^a).
$$ 
Si $\delta\in \{-1, 1\}$ d\'esigne la norme absolue de $\epsilon$ et $\nu$ le degr\'e de $K$ sur $\Q(\epsilon)$, alors on a 
$$
U_{d}(a)=\delta^{\nu} U_d(a-1)\quad\mbox{avec} \quad U_d(0)= \rmNorm_{K/\Q}(\alpha).
$$

Pour $h=1,\dots,d-1$, la suite $\bigl(U_h(a)\bigr)_{a\in\Z}$ 
v\'erifie une relation de r\'ecurrence lin\'eaire faisant intervenir le polyn\^ome irr\'eductible de $\epsilon$. 
Nous allons pr\'eciser ces r\'ecurrences quand l'unit\'e $\epsilon$ est quadratique (\S $\ref{SS:ExtensionsCorpsQuadratiques}$ et \S $\ref{SS:ExempleExtensionsCorpsQuadratiques}$) et quand $\epsilon$ est cubique (\S $\ref{SS:ExtensionsCorpsCubiques}$ et \S $\ref{SS:CubiquesSimplesShanks}$). 

\subsection{Extensions de corps quadratiques r\'eels}\label{SS:ExtensionsCorpsQuadratiques}

Nous supposons ici que le corps $K$ contient un corps quadratique r\'eel $k$ et que $\epsilon$ est une unit\'e de $k$. Notons $\epsilonbar$ le conjugu\'e galoisien de $\epsilon$. La norme absolue de $\epsilon$ est $\delta= \epsilon\epsilonbar\in\{-1, 1\}$. 

Soit $h\in\{1,\dots,d-1\}$. Comme $\sigma_{j_1}(\epsilon)\cdots\sigma_{j_h}(\epsilon)$ peut s'\'ecrire $\epsilon^\ell{\epsilonbar\, }^{h-\ell}$ avec $0\le \ell\le h$, la suite $\bigl(U_h(a)\bigr)_{a\in\Z}$ est une combinaison lin\'eaire des $h+1$ suites 
$$
\bigl((\epsilon^\ell{\epsilonbar\, }^{h-\ell})^a\bigr)_{a\in\Z}, \;\ell=0,\dots,h.
$$
 Donc $\bigl(U_h(a)\bigr)_{a\in\Z}$ est une suite r\'ecurrente lin\'eaire d'ordre $h+1$ dont le polyn\^ome caract\'eristique est
$$
\prod_{\ell=0}^h (T-\epsilon^\ell{\epsilonbar\, }^{h-\ell}).
$$
En \'ecrivant 
$$
F_a(X,Y)
=
 \delta^{ad/2} \rmN_{K/\Q}(\alpha) \prod_{i=1}^d \bigl(Y-\sigma_i(\alpha^{-1}\epsilon^{-a})X\bigr),
$$
on voit que $\bigl(U_h(a)\bigr)_{a\in\Z}$ est aussi une suite r\'ecurrente lin\'eaire d'ordre $d-h+1$, dont le polyn\^ome caract\'eristique est 
$$
\prod_{\ell=0}^{d-h} (T-\delta^{d/2}\epsilon^{-\ell}{\epsilonbar\, }^{-d+h+\ell}).
$$

Pour $h=1$, la suite $\bigl(U_1(a)\bigr)_{a\in\Z}$ est r\'ecurrente lin\'eaire d'ordre $2$, son polyn\^ome caract\'eristique \'etant le polyn\^ome irr\'eductible de $\epsilon$;
noter que 
$$
U_1(a)=\trace_{K/\Q}(\alpha\epsilon^a).
$$
 De m\^eme, 
pour $h=d-1$, la suite $\bigl(U_{d-1}(a)\bigr)_{a\in\Z}$ est r\'ecurrente lin\'eaire d'ordre $2$, son polyn\^ome caract\'eristique \'etant le polyn\^ome irr\'eductible de $\delta^{d/2}\epsilon^{-1}$; noter que 
$$
U_{d-1}(a)=U_d(a)\trace_{K/\Q}\bigl(\alpha^{-1}\epsilon^{-a}\bigr).
$$
Pour $h=2$, le polyn\^ome caract\'eristique de la r\'ecurrence lin\'eaire d'ordre $3$ v\'erifi\'ee par la suite $\bigl(U_2(a)\bigr)_{a\in\Z}$ est 
$$ 
(T-\delta)(T-\epsilon^2)(T-{\epsilonbar\, }^2).
$$
Les suites $\bigl(U_2(a+2)\bigr)_{a\in\Z}$, $\bigl(U_2(a+1)\bigr)_{a\in\Z}$, $\bigl(U_2(a)\bigr)_{a\in\Z}$ et $\bigl(\delta^a\bigr)_{a\in\Z}$ sont lin\'eai\-rement d\'ependantes, donc $\bigl(U_2(a)\bigr)_{a\in\Z}$ v\'erifie aussi une relation de r\'ecurrence de la forme
$$
U_2(a+2)=c_1U_2(a+1)+c_2U_2(a)+c_3\delta^a \qquad (a\in\Z),
$$ 
o\`u $c_1$ et $c_2$ sont d\'etermin\'es par
$$
T^2-c_1T-c_2=(T-\epsilon^2)(T-{\epsilonbar\, }^2),
$$
tandis que $c_3$ est d\'etermin\'e par les conditions initiales $U_2(-1),U_2(0),U_2(1)$. 

\subsection{Exemples d'extensions de corps quadratiques r\'eels}\label{SS:ExempleExtensionsCorpsQuadratiques}
Nous explicitons les formules du \S $\ref{SS:ExtensionsCorpsQuadratiques}$
dans le cas particulier de corps de nombres consid\'er\'es par L. Bernstein et H.~Hasse (voir \cite{B--H}), \`a savoir 
les corps $K=\Q(\omega)$ o\`u $\omega^m = D^m \pm d$ avec $D\in \N, d\in \Z, d|D$, pour lesquels ils ont exhib\'e les unit\'es 
$$
\frac{1}{d}
 (\omega^t-D^t)^{m/t}\quad \mbox{o\`u} \quad t|m \;\mbox{avec}\; 1\leq t<m.
$$ 
 
Ici nous allons consid\'erer la famille de corps $K$ de degr\'e $2n$ d\'efinie par 
$$
K=\Q(\omega) \; \mbox{ o\`u } \; \omega = \sqrt[2n]{D^{2n}+c} > 1\;\mbox{ avec }\; D\in \N, \; n\ge 2,\; c \in \{-1, 1\}
$$ 
 et utiliser les deux unit\'es ind\'ependantes de $K$ donn\'ees par 
 $$
\alpha =D+ \omega \quad \mbox{et} \quad \epsilon=D^n+ \omega^n,
$$
 l'unit\'e $\alpha$ \'etant de degr\'e $2n$ et l'unit\'e $\epsilon$ quadratique.

\begin{proposition}\label{PropositionExtensionQuadratique}
Soit
$$
\begin{array}{lll}
F_a(X,Y) 
&=&X^{2n}-U_1(a)X^{2n-1}Y+U_2(a)X^{2n-2}Y^2-\cdots \\ [1mm]
&&\hskip 3.72cm - U_{2n-1}(a)XY^{n-1}+U_{2n}(a) Y^{2n}
\end{array}
$$
la version homog\'en\'eis\'ee du polyn\^ome minimal $F_a(X,1)$ de $\alpha \epsilon^a$.
Cette forme binaire s'\'ecrit
$$
F_a(X,Y)\,=\,
\bigl(
(X-\epsilon^aD)^n-\epsilon^{na}\omega^nY^n
\bigr)
\bigl(
(X- {\epsilonbar\ }^aD)^n+{\epsilonbar\ }^{na}\omega^n Y^n
\bigr).
$$
De plus, les propri\'et\'es suivantes sont v\'erifi\'ees.

\mbox{\rm(i)} On a l'\'egalit\'e $U_{2n}(a)=(-c)^{na+1}$. 

\mbox{\rm (ii)}
Pour $1\le h\le 2n$, la suite $\bigl(U_h(a)\bigr)_{a\in\Z}$ v\'erifie une relation de r\'ecurrence lin\'eaire d'ordre $\min\{h+1,2n-h+1\}$. 

\mbox{\rm (iii)} Pour $h=1$ on a 
$$
U_1(a)=nD(\epsilon^a+{\epsilonbar\, }^a).
$$ 
On en d\'eduit
$$
U_1(a+2)=2D^n U_1(a+1)+cU_1(a),
$$
avec les conditions initiales 
$$
U_1(0)=2nD, \quad 
U_1(1)=2nD^{n+1}.
$$

\mbox{\rm(iv)}
Pour $h=2n-1$ on a 
$$
 U_{2n-1}(a)=
 (-c)^{(n-1)a} n D^{n-1}
 \bigl(
 D^n(\epsilon^a+{\epsilonbar\ }^a)+
 (-1)^{n-1} \omega^n 
 (\epsilon^a-{\epsilonbar\ }^a)
\bigr). 
$$ 
On en d\' eduit
$$
U_{2n-1}(a+2)= 2(-c)^{n-1}D^n U_{2n-1}(a+1)+cU_{2n-1}(a)
$$
avec les conditions initiales 
$$
U_{2n-1}(0)=2nD^{2n-1} , \quad 
U_{2n-1}(1)=
\begin{cases}
2nD^{n-1} &\hbox{ si $n$ est pair,}
\\
2n D^{n-1} (2D^{2n}+c)
&\hbox{ si $n$ est impair.}
\end{cases}
$$

\mbox{\rm (v)}
Pour $h=2$ on a
$$
U_2(a)=
\begin{cases}
4D^2(-c)^a+\epsilonbar\epsilon^{2a}+\epsilon{\epsilonbar\, }^{2a}
\hbox{ pour $n=2$,}
\\
\displaystyle
n^2D^2(-c)^a+\frac{n(n-1)}{2} D^2(\epsilon^{2a}+{\epsilonbar\, }^{2a})
\hbox{ pour $n\ge 3$.}
\end{cases}
$$
La suite $\bigl(U_2(a)\bigr)_{a\in\Z}$ v\'erifie donc la relation de r\'ecurrence 
lin\'eaire 
$$
U_2(a+3)=(4D^{2n}+c)U_2(a+2) +(4cD^{2n}+1) U_2(a+1) -cU_2(a)
$$
avec les conditions initiales
\begin{align}
\notag
&
\quad
U_2(0)=n(2n-1)D^2
\\
\notag
&
{\begin{cases}
U_2(-1)=2D^2(4D^4+c), \quad U_2(1)=-6cD^6 
&\hbox{pour $n=2$,}
\\
U_2(-1)=U_2(1)=2n(n-1) D^{2n+2} -cnD^2 
&\hbox{pour $n\ge 3$}.
\end{cases}
}\end{align} 
On en d\'eduit la relation de r\'ecurrence lin\'eaire non homog\`ene 
$$
U_2(a+2)=2(2D^{2n}+c)U_2(a+1) - U_2(a)+ 4cn^2D^2(D^{2n}+c) (-c)^{a}.
$$

\end{proposition}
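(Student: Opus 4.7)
First, I would establish the key observation that the $2n$ embeddings of $K = \Q(\omega)$ into $\C$ are $\sigma_j : \omega \mapsto \zeta^j \omega$ with $\zeta = e^{i\pi/n}$, so that $\zeta^n = -1$ and $\sigma_j(\epsilon) = D^n + (-1)^j \omega^n$. Hence $\sigma_j(\epsilon) = \epsilon$ for $j$ even and $\sigma_j(\epsilon) = \epsilonbar$ for $j$ odd, and note $\epsilon\epsilonbar = D^{2n} - \omega^{2n} = -c$. Grouping the product
$$F_a(X,Y) = \prod_{j=0}^{2n-1}\bigl(X - (D + \zeta^j \omega)\sigma_j(\epsilon)^a Y\bigr)$$
by the parity of $j$ and applying the identity $\prod_{k=0}^{n-1}(U - \eta^k V) = U^n - V^n$ with $\eta = \zeta^2$ to each half yields the announced product decomposition $F_a = P_a\cdot Q_a$. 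Assertion (i) then follows at once from $F_a(0,1) = P_a(0,1)Q_a(0,1) = (\epsilon\epsilonbar)^{na}(D^{2n} - \omega^{2n}) = (-c)^{na+1}$, and (ii) is a direct consequence of the fact that $\sigma_j(\epsilon^a) \in \{\epsilon^a, \epsilonbar^a\}$, so that $U_h(a)$ is a $\Z[\omega]$-linear combination of the $h+1$ sequences $\bigl((\epsilon^\ell \epsilonbar^{h-\ell})^a\bigr)_{a\in\Z}$; the complementary bound $2n-h+1$ has already been noted in \S\ref{SS:ExtensionsCorpsQuadratiques}.

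Items (iii) and (iv) come from the formula for $U_1(a) = \trace_{K/\Q}(\alpha \epsilon^a)$ and the analogous expansion for $U_{2n-1}$. Splitting the trace over even and odd indices and using $\sum_{k=0}^{n-1} \eta^k = 0$ to kill the $\omega$ contributions gives $U_1(a) = nD(\epsilon^a + \epsilonbar^a)$; since $\epsilon, \epsilonbar$ are the roots of $T^2 - 2D^n T - c$, the stated order-two recurrence and its initial data are immediate. For $U_{2n-1}$, I would expand $P_a$ and $Q_a$ by the binomial theorem and pick out the coefficient of $XY^{2n-1}$ in $P_a Q_a$, which is $p_{n-1}q_n + p_n q_{n-1}$; after simplifying using $\epsilon \epsilonbar = -c$, all the $\epsilon^{na}, \epsilonbar^{na}$ factors collapse to a global $(-c)^{(n-1)a}$ multiplier and one recovers the stated expression. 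The recurrence is again the one governing $\epsilon^a, \epsilonbar^a$, rescaled by $(-c)^{(n-1)a}$ (which, since $c^2 = 1$, gives the constants $\pm(-c)^{n-1}$). The case-split in $U_{2n-1}(1)$ according to the parity of $n$ reflects the factor $(-1)^{n-1}$ in front of $\omega^n$: when $n$ is odd, the sum $2[D^{2n}+\omega^{2n}] = 2(2D^{2n}+c)$ survives, while when $n$ is even it collapses against the $D^{2n}$ contribution to $-2c$.

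For (v), I would extract the coefficient of $X^{2n-2} Y^2$ from $P_a Q_a$, namely $p_0 q_2 + p_1 q_1 + p_2 q_0$. When $n \geq 3$ the three terms are pure binomial monomials and one finds $n^2 D^2(-c)^a + \binom{n}{2} D^2 (\epsilon^{2a}+\epsilonbar^{2a})$. When $n = 2$, however, $p_2$ and $q_2$ pick up the additional correction $\mp \omega^n Y^n$ (since the index $i=2$ then coincides with the top degree $n$), and the extra terms $\epsilon^{2a}(D^2 - \omega^2) = \epsilon^{2a}\epsilonbar$ and its conjugate produce the asymmetric coefficients $\epsilonbar$ and $\epsilon$ in front of the powers of $\epsilon^2$ and $\epsilonbar^2$. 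In both regimes $U_2(a)$ is a combination of $(-c)^a$, $\epsilon^{2a}$ and $\epsilonbar^{2a}$, so the characteristic polynomial of the recurrence is $(T + c)(T - \epsilon^2)(T - \epsilonbar^2)$; expanding via $\epsilon + \epsilonbar = 2D^n$, $\epsilon \epsilonbar = -c$ and $c^2 = 1$ delivers the stated cubic. Applying only the quadratic operator $(T - \epsilon^2)(T - \epsilonbar^2) = T^2 - (4D^{2n}+2c)T + 1$ to $U_2(a)$ kills the $\epsilon^{2a}, \epsilonbar^{2a}$ parts and leaves exactly the claimed multiple of $(-c)^a$, yielding the non-homogeneous order-two recurrence.

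The core of the argument is straightforward once the factorization is in hand; the real bookkeeping obstacle is the exceptional case $n = 2$, where the top-degree terms $p_n, q_n$ of the factors collide with the degree-two coefficient and produce the asymmetric formula for $U_2$, together with the parity-dependent initial condition for $U_{2n-1}(1)$ in (iv), where one must carefully track the sign $(-1)^{n-1}$ sitting on the $\omega^n$ correction that distinguishes $P_a$ from $Q_a$.
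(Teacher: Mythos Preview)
Your proposal is correct and follows essentially the same route as the paper: first obtain the factorization $F_a=P_aQ_a$, then read off the $U_h(a)$ as coefficients, with the special handling of $n=2$ for part~(v). The only cosmetic differences are that the paper derives the factorization by observing that $(X-\epsilon^aD)^n-\epsilon^{na}\omega^n$ vanishes at $\alpha\epsilon^a$ and then multiplies by its quadratic Galois conjugate (rather than grouping the $2n$ embeddings by parity as you do), and that it packages the coefficient extraction via two auxiliary sequences $V_h(a)$ and $W_h(a)$ coming from $(X-\epsilon^aD)^n(X-\bar\epsilon^{\,a}D)^n$ and from the $\omega^n$--cross term, instead of your direct bookkeeping with the $p_iq_j$.
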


\begin{proof}[D\'emonstration] 
Le polyn\^ome irr\'eductible de $\alpha$ est
$$
F_0(X,1)
\,=\,
(X-D)^{2n}-D^{2n}-c
\,= \,\sum_{h=0}^{2n-1} (-1)^h \binom{2n}{h}D^h X^{2n-h}-c,
$$
ce qui donne 
$$
U_h(0)= \binom{2n}{h}D^h \quad(1\le h\le 2n-1).
$$
En particulier,
$$
U_1(0)=2nD, \quad
U_2(0)=n(2n-1)D^2,
\quad
U_{2n-1}(0)=2nD^{2n-1}.
$$
Le polyn\^ome 
$$
(X-\epsilon^aD)^n-\epsilon^{na}\omega^n,
$$
de degr\'e $n$, est \`a coefficients dans l'anneau $\Z[\epsilon]$ et s'annule au point $\alpha \epsilon^a$. Le conjugu\'e galoisien de $\omega^n$ est $-\omega^n$, celui de $\epsilon$ est $\epsilonbar=D^n-\omega^n$. Le produit 
$$
\big(
(X-\epsilon^aD)^n-\epsilon^{na}\omega^n
\bigr)
\big(
(X- {\epsilonbar\ }^aD)^n+ {\epsilonbar\ }^{na}\omega^n\bigr)
$$
est \`a coefficients dans $\Z$ et s'annule au point $\alpha \epsilon^a$: c'est le polyn\^ome $F_a(X,1)$. 

D\'efinissons les suites $\bigl( V_h(a)\bigr)_{a\in\Z}$ et  $\bigl( W_h(a)\bigr)_{a\in\Z}$ pour $0\le h\le 2n$ par 
$$ 
(X-\epsilon^aD)^n (X- {\epsilonbar\ }^aD)^n=\sum_{h=0}^{2n} (-1)^h V_h(a) X^{2n-h}
$$
et
$$
(X-\epsilon^aD)^n{\epsilonbar\ }^{na}
-(X- {\epsilonbar\ }^aD)^n \epsilon^{na}
=\sum_{h=0}^{2n} (-1)^h W_h(a) X^{2n-h}.
$$
On a 
$$
U_0(a)=V_0(a)=1, 
\quad W_h(a)=0  \quad (0\le h\le n-1), 
$$
$$
V_{2n}(a)=(-c)^{na} D^{2n}, \quad W_{2n}(a)=0,
$$
$$
U_h(a)=V_h(a)+W_h(a)\omega^n \quad (0\le h\le 2n-1),
$$
$$
V_h(a)=D^h \sum_{
\atop{0\le i,j\le n}{ i+j=h}}
\binom{n}{i} \binom{n}{j} \epsilon^{ai} {\epsilonbar\ }^{aj}
 \quad (1\le h\le 2n)
$$
et, pour $n\le h\le 2n-1$,
$$
W_h(a)= (-1)^{n-1}(-c)^{(h-n)a}\binom{n}{2n-h} D^{h-n}
\bigl( \epsilon^{a(2n-h)}-{\epsilonbar\ }^{a(2n-h)}\bigr).
$$

\mbox{\rm (i)} 
La norme absolue de $\epsilon$ est $\delta=\epsilon\epsilonbar=-c$, sa norme de $K$ sur $\Q$ est $\delta^n=(-c)^n\in\{-1, 1\}$. La norme de $\alpha$ est $-c$, celle de $\alpha\epsilon^a$ est donc $U_{2n}(a)=(-c)^{na+1}$.

\mbox{\rm (ii)} La remarque au d\'ebut du $\S \ref{SS:ExtensionsCorpsQuadratiques}$ montre que la suite $\bigl( U_h(a)\bigr)_{a\in\Z}$ v\'erifie la relation de r\'ecurrence lin\'eaire d'ordre $h+1$ de polyn\^ome caract\'eristique 
$$
\prod_{\ell=0}^h (T-\epsilon^\ell {\epsilonbar\ }^{h-\ell})
$$
ainsi que la relation de r\'ecurrence lin\'eaire d'ordre $2n-h+1$ de polyn\^ome ca\-rac\-t\'eristique 
$$
\prod_{k=0}^{2n-h} 
\bigl(T- (-c)^{n-h} \epsilon^k {\epsilonbar\ }^{2n-h-k}\bigr).
$$

\mbox{\rm (iii)} 
Pour $1\le h\le n-1$, on a  $U_h(a)=V_h(a)$, ce qui donne
$$
U_h(a)=D^h \sum_{
\atop{0\le i,j\le n}{ i+j=h}}
\binom{n}{i} \binom{n}{j} \epsilon^{ai} {\epsilonbar\ }^{aj}.
$$
On en d\'eduit  $U_1(a)=nD(\epsilon^a+{\epsilonbar\, }^a)$.
La relation de r\'ecurrence lin\'eaire pour la suite $\bigl(U_1(a)\bigr)_{a\in\Z}$ r\'esulte de 
$$
(T-\epsilon)(T-\epsilonbar)=T^2-2D^nT-c.
$$

\mbox{\rm (iv)}  
On a 
$$
\left\{
\begin{matrix}
 V_{2n-1}(a)&=&(-c)^{(n-1)a}nD^{2n-1} (\epsilon^a + {\epsilonbar\ }^a),
 \hfill
 \\
W_{2n-1}(a)&=&(-1)^{n-1} (-c)^{(n-1)a} n D^{n-1}(\epsilon^a - {\epsilonbar\ }^a), \hfill
 \\
U_{2n-1}(a)&=&V_{2n-1}(a)+W_{2n-1}(a)\omega^n. \hfill
\end{matrix}
\right.
$$
Pour $a=1$ on trouve
$$
 V_{2n-1}(1)=2(-c)^{n-1}n D^{3n-1},\quad
 W_{2n-1}(1)=2c^{n-1}  n D^{n-1}\omega^n.
$$
La relation de r\'ecurrence lin\'eaire pour $\bigl(U_{2n-1}(a)\bigr)_{a\in\Z}$ r\'esulte de 
$$
\bigl(T- (-c)^{n-1}\epsilon\bigr)\bigl(T- (-c)^{n-1}\epsilonbar\bigr)=T^2-2(-c)^{n-1}D^nT-c.
$$ 

\mbox{\rm (v)} 
Comme 
$$
\epsilon^2=2D^{2n}+c+2D^n\omega^n,
$$
la trace de $\epsilon^2$ est $2(2D^{2n}+c)$, sa norme est $1$, et son polyn\^ome irr\'eductible est
$$
T^2-2(2D^{2n}+c)T+1.
$$
Le polyn\^ome caract\'eristique de la relation de r\'ecurrence homog\`ene satisfaite par la suite $\bigl(U_2(a)\bigr)_{a\in\Z}$ est 
$$
(T-\epsilon^2)(T-{\epsilonbar\, }^2)(T+c)= 
T^3-(4D^{2n}+c)T^2-(4cD^{2n}+1)T+c.
$$
Lorsque $n\ge 3$, 
$$
U_2(a)=V_2(a)=(-c)^an^2D^2+\frac{n(n-1)}{2} D^2(\epsilon^{2a}+ {\epsilonbar\ }^{2a}).
$$ 

Il reste \`a consid\'erer le cas particulier $n=2$. 
L'unique sous-corps quadratique de $K$ est $\Q(\omega^2)$. Pour tout $a\in\Z$ on a $\Q(\alpha\epsilon^a)=K$; donc ici $\sfA=\Z$. 
Les conjugu\'es de $\alpha\epsilon^a$ sont 
$$
\epsilon^a (\omega+D),\quad
{\epsilonbar\, }^a (i\omega+D),\quad
\epsilon^a (-\omega+D),\quad
{\epsilonbar\, }^a(-i\omega+D);
$$
donc 
$$
U_1(a)=2D(\epsilon^a+{\epsilonbar\, }^a),
\quad 
U_2(a)= 
(-c)^a 4 D^2 +\epsilonbar \epsilon^{2a}+\epsilon {\epsilonbar\, }^{2a}.
$$
 Les conjugu\'es de $\alpha^{-1}\epsilon^{-a}$ sont 
$$
c \epsilon^{-a+1} (\omega-D),\quad
c\, {\epsilonbar\, }^{-a+1} (i\omega-D),\quad
c \epsilon^{-a+1} (-\omega-D),\quad
c\, {\epsilonbar\, }^{-a+1} (-i\omega-D);
$$
donc 
$$
U_3(a)=-c\, \trace_{K/\Q}(\alpha^{-1}\epsilon^{-a})=2D(\epsilon^{-a+1}+{\epsilonbar\, }^{-a+1}).
$$ 
En conclusion, nous avons 
$$
F_a(X,Y)=X^4-U_1(a)X^3Y+U_2(a)X^2Y^2-U_3(a)XY^3-cY^4, 
$$
alors que 
$$\left\{
\begin{array}{lll}
U_1(a+2)&=&2D^2U_1(a+1)+cU_1(a)\\
&& \mbox{avec }\;
U_1(0)=4D, \; 
 U_1(1)=4D^3, \\ [3mm]
U_2(a+2)&=&(4D^4+2c)U_2(a+1)-U_2(a)-(-c)^{a+1}16D^2(D^4+c)\\
&& \mbox{avec }\; U_2(0)=6D^2, \;
U_2(1)=-6cD^2, \\ [3mm]
U_2(a+3)&=&(4D^4+c)U_2(a+2)+(4cD^4+1)U_2(a+1)-cU_2(a)\\
&& \mbox{avec }\; U_2(0)=6D^2, \;
U_2(1)=-6cD^2, \;U_2(2)= -8cD^6-2D^2, \\ [3mm]
U_3(a+2)&=& -2cD^2U_3(a+1)+cU_3(a)\\ 
 & & \mbox{avec }\;
U_3(0)=4D^3, \;
U_3(1)=4D.
\end{array}\right.
 $$
\end{proof}

\subsection{Extensions de corps cubiques totalement r\'eels}\label{SS:ExtensionsCorpsCubiques}
Nous \'etudions maintenant le cas particulier du th\'eor\`eme \ref{Theoreme:rang1}
o\`u $\epsilon$ est une unit\'e cubique totalement r\'eelle. 

On suppose donc que le corps de nombres $K=\Q(\alpha)$ contient un corps cubique totalement r\'eel $k$. Soit $\epsilon$ une unit\'e de $K$ diff\'erente de $1, -1$; notons $\epsilon_1,\epsilon_2,\epsilon_3$ les conjugu\'es galoisiens de $\epsilon$ avec $\epsilon_1=\epsilon$. La norme absolue de $\epsilon$ est 
$$
\delta=\rmNorm_{k/\Q}(\epsilon)= \epsilon_1\epsilon_2\epsilon_3\in\{-1,1 \}.
$$ 
Soit $h\in\{1,\dots,d-1\}$. Comme $\sigma_{j_1}(\epsilon)\cdots\sigma_{j_h}(\epsilon)$ peut s'\'ecrire 
$$
\epsilon_1^{\ell_1} \epsilon_2^{\ell_2} \epsilon_3^{\ell_3}\quad \mbox{avec} \quad \ell_1+\ell_2+\ell_3= h,
$$
la suite $\bigl(U_h(a)\bigr)_{a\in\Z}$ est une combinaison lin\'eaire des $\binom{h+2}{2}$ suites 
$$
\bigl((\epsilon_1^{\ell_1}\epsilon_2^{\ell_2}\epsilon_3^{\ell_3})^a\bigr)_{a\in\Z},\quad \ell_1+\ell_2+\ell_3=h.
$$ 
Donc $\bigl(U_h(a)\bigr)_{a\in\Z}$ est une suite r\'ecurrente lin\'eaire d'ordre $\frac{(h+1)(h+2)}{2}$, dont le polyn\^ome caract\'eristique est 
$$
\prod_{\ell_1+\ell_2+\ell_3=h} \bigl(T-\epsilon_1^{\ell_1}\epsilon_2^{\ell_2}\epsilon_3^{\ell_3}\bigr).
$$
Notons 
$$
T^3-rT^2+sX-\delta
$$
 le polyn\^ome irr\'eductible de $\epsilon$ sur $\Q$. Le fait que ce polyn\^ome soit irr\'eductible s'\'ecrit $r-s\neq 1-\delta$ et $r+s\neq -1-\delta$.
 
Pour $a\in\Z$ on a 
$$\left\{
\begin{array}{rcl}
U_1(a)&=& \alpha_1 \epsilon_1^a+\alpha_2 \epsilon_2^a+ \alpha_3 \epsilon_3^a,\\ [2mm]
U_{1}(a+3)&=&rU_{1}(a+2)-sU_{1}(a+1)+\delta U_{1}(a).
\end{array} \right. 
$$
Notons que $\delta^{d/3} =\delta^d$. 
Comme $U_d(a)= \delta^{ad} \rmN_{K/\Q}(\alpha)$ et comme le polyn\^ome irr\'eductible de $\delta^d \epsilon^{-1} $ est 
$$
T^3-\delta^{d+1} sT^2+\delta rT-\delta^{d+1},
$$
on a aussi, pour $a\in\Z$,
$$\left\{
\begin{array}{rcl}
U_{d-1}(a)&= &U_d(a) \bigl(
 \alpha_1^{-1} \epsilon_1^{-a}+ \alpha_2^{-1} \epsilon_2^{-a}+\alpha_3^{-1} \epsilon_3^{-a}
 \bigr),
\\ [2mm]
U_{d-1}(a+3)&=& \delta^{d+1} sU_{d-1}(a+2)-\delta rU_{d-1}(a+1)+ \delta^{d+1} U_{d-1}(a).
\end{array} \right.
$$

Consid\'erons le cas particulier o\`u $\alpha$ est de degr\'e $3$. Noter que dans ce cas, l'ensemble $\Z\setminus \sfA$ contient au plus un \'el\'ement. On a 
$$ 
F_a(X,Y) = X^3-U_1(a) X^2Y+U_{2}(a) XY^2-U_3(a)Y^3
$$
avec $U_3(a)=\delta^a \rmN_{K/\Q}(\alpha)$. 
\'Ecrivons les conditions initiales pour la suite r\'ecurrente lin\'eaire $\bigl(U_1(a)\bigr)_{a\in\Z}$. 
Des relations 
$$
\epsilon_1+\epsilon_2+\epsilon_3=r,
\quad
\epsilon_1\epsilon_2+\epsilon_1\epsilon_3+\epsilon_2\epsilon_3= s,
$$
nous d\'eduisons
$$
\epsilon_1^2+\epsilon_2^2+\epsilon_3^2=r^2-2 s
$$
et
$$
\epsilon_1^3+\epsilon_2^3+\epsilon_3^3=r^3-3 rs+3\delta.
$$
\'Ecrivons 
$$
\alpha=A+B\epsilon+C\epsilon^2.
$$
Alors 
$$\left\{
\begin{array}{rcl}
U_1(-1)&=&A\delta s+3B+Cr,\\ [2mm]
U_1(0)&=&
3A+Br+C(r^2-2s),
\\ [2mm]
U_1(1)&=&
Ar+B(r^2-2s)+C(r^3 -3rs +3\delta).
\end{array} \right.
$$
Dans le cas particulier $\alpha=\epsilon$, c'est-\`a-dire $A=0$, $B=1$, $C=0$, les calculs se simplifient et les conditions initiales deviennent
$$
U_1(-1)=3,
\quad
U_1(0)=r,
\quad
U_1(1)=r^2-2s
$$
et de m\^eme
$$
U_2(-1)=3,
\quad
U_2(0)=s,
\quad
U_2(1)=s^2-2\delta r.
$$

\subsection{Exemples de corps cycliques cubiques}\label{SS:CubiquesSimplesShanks}
Consid\'erons la famille des corps cubiques cycliques des plus simples de Shanks \cite{LW3}. 
Soit $n\in\Z$. D\'esignons par $\lambda$ (d\'ependant de $n$) une racine du polyn\^ome
$$ 
f(X)=X^3-(n-1)X^2 -(n+2)X -1.
$$
Les racines de $f$ sont 
$$
\lambda_1=\lambda, \quad \lambda_2=-\frac{1}{\lambda+1},\quad \lambda_3=-\frac{\lambda+1}{\lambda}\cdot
$$
Le groupe des unit\'es de $K$ (modulo $\{-1, 1\}$) est engendr\'e par deux unit\'es quelconques de $\{\lambda_1, \lambda_2, \lambda_3\}$. 

Soient $b_1,b_2,c_1,c_2$ des entiers rationnels avec $b_1c_2\not=b_2c_1$. En prenant $\epsilon=\lambda_1^{b_1}\lambda_2^{b_2}$ et 
$\alpha=\lambda_1^{c_1}\lambda_2^{c_2}$, nous sommes dans la situation de la section 
$\ref{SS:ExtensionsCorpsCubiques}$ et nous pouvons appliquer le corollaire $\ref{Corollaire:cubiquetotalementreel}$ au polyn\^ome minimal de 
$$
\alpha \epsilon^a=\lambda_1^{ab_1+c_1}\lambda_2^{ab_2+c_2} \qquad (a\in\Z).
$$
Pour donner un exemple explicite, prenons $b_1=c_2=0$, $b_2=c_1=1$. 
La famille infinie $F_a(X,Y)$ de formes binaires est donc obtenue en tordant la version homog\'en\'eis\'ee $F(X,Y)$ de $f(X)$ par les puissances ${\lambda}_2^a$ de $\lambda_2$, ($a\in\Z$). 
 
Le polyn\^ome minimal $F_a(X,1)$ de $\lambda_1 \lambda_2^a$ fait intervenir
 les suites r\'ecurrentes lin\'eaires $(s_a)_{a\ge 0}$ et $(t_a)_{a\ge 0}$ d'ordre $3$ d\'efinies par
$$
s_a= \lambda _1\lambda_2^a+\lambda_2 \lambda_3^a+ \lambda_3 \lambda_1^a,
\qquad
t_a= \lambda _1^{-1}\lambda_2^{-a}+\lambda_2^{-1} \lambda_3^{-a}+ \lambda_3^{-1} \lambda_1^{-a}.
$$
On a 
$$
\begin{array}{lll}
F_a(X,Y)&=&\left( X- \lambda_1\lambda_2^aY \right)\left( X- \lambda_2\lambda_3^aY \right)\left( X- \lambda_3\lambda_1^aY \right)\\ [2mm]
&=&X^3-s_aX^2Y+t_aXY^2-Y^3
\end{array}
$$
avec
$$
s_a= \trace_{K/ \Q}(\lambda_1 \lambda_2^a),
\quad 
t_a = \trace_{K/ \Q}(\lambda_1^{-1} \lambda_2^{-a}) = \trace_{K/ \Q}(\lambda_1 \lambda_3^{-a+1}).
$$
En particulier, 
$$
s_2=\trace_{K/ \Q}(\lambda_3 \lambda_1^{2})=\trace_{K/ \Q}(-(\lambda_1+1) \lambda_1) =\trace_{K/ \Q}(-\lambda_1-\lambda_1^2)=-n^2 -n-4
$$
et
$$
t_2=\trace_{K/ \Q}(\lambda_3 \lambda_2^{-1})=\trace_{K/ \Q}((\lambda_1 +1)^2 /\lambda_1)=\trace_{K/ \Q}(\lambda_1+2+\lambda_1^{-1} )=3.
$$
Ainsi
$$\left\{
\begin{array}{l}
s_0= n-1,\\ [1mm]
s_1= -n-2,\\ [1mm]
s_2= -n^2-n-4,\\ [1mm]
s_{a+3}=\!(n\!-\!1)s_{a+2}\!+\!(n\!+\!2)s_{a+1}\!+\!s_a,\\
\end{array}\right.
\!\! \!\!
\left\{
\begin{array}{l}
t_0= -n-2,\\ [1mm] 
t_1= n-1,\\ [1mm]
t_2=3\\ [1mm]
t_{a+3}=\!-(n\!+\!2)t_{a+2}\!-\!(n\!-\!1)t_{a+1}\!+\!t_a,\\
\end{array} \right. 
$$
pour $a\in\Z$.

Le corollaire $\ref{Corollaire:cubiquetotalementreel}$ affirme 
qu'il existe une constante effectivement calculable $\cst{kappacorollaire}>0$, ne d\'ependant que de $n$ (donc de $\lambda$), ayant la propri\'et\'e suivante:
Pour tout $m\geq 2$, tout triplet $(a,x,y)\in \Z^3$ satisfaisant 
$$
 |F_a(x,y)|\leq m, \;\mbox{ avec } \; xy\neq 0,
 $$
v\'erifie
$$
\max\{\log|x|, \; \log|y|,\; |a|\} \leq \cst{kappacorollaire} \log m.
$$

\medskip
 \medskip\noindent
 {\bf Remerciements.} 
 Le premier auteur a b\'en\'efici\'e d'un soutien financier du CRSNG.
 Le second auteur a b\'en\'efici\'e d'un s\'ejour \`a l'Universit\'e Roma Tre pendant lequel il a travaill\'e sur ce texte; il remercie Francesco Pappalardi pour son invitation, ainsi que Corrado Falcolini pour son aide avec le logiciel de calcul {\sl Mathematica}. 

\bibliographystyle{amsplain}

\end{document}